\numberwithin{equation}{section}
\renewcommand{\phi}{\varphi}
\theoremstyle{definition}
\newtheorem{Def}{Definition}[section]
\theoremstyle{plain}
\newtheorem{prop}[Def]{Proposition}
\newtheorem{thm}[Def]{Theorem}
\newtheorem{lem}[Def]{Lemma}
\theoremstyle{remark}
\newtheorem{rem}[Def]{Remark}
\DeclarePairedDelimiter{\abs}{|}{|}
\DeclarePairedDelimiter{\norm}{\|}{\|}
\DeclarePairedDelimiter{\set}{\{}{\}}
\DeclareMathOperator{\divergence}{div}
\newcommand{\e}{\varepsilon}
\newcommand{\R}{\mathbb{R}}
\newcommand{\Lip}{\text{Lip}}
\newcommand{\rel}{{\rm rel}}
\DeclareMathOperator{\BMO}{BMO}
\begin{document}

\title[Uniqueness and vanishing viscosity for Euler]{Weak-strong uniqueness and vanishing viscosity for incompressible Euler equations in exponential spaces}

\author[L.~De Rosa]{Luigi De Rosa}
\address[L.~De Rosa]{Department Mathematik und Informatik, Universität Basel, Spiegelgasse~1, 4051 Basel, Switzerland}
\email{luigi.derosa@unibas.ch}

\author[M.~Inversi]{Marco Inversi}
\address[M.~Inversi]{Department Mathematik und Informatik, Universität Basel, Spiegelgasse~1, 4051 Basel, Switzerland}
\email{marco.inversi@unibas.ch}

\author[G.~Stefani]{Giorgio Stefani}
\address[G.~Stefani]{Scuola Internazionale Superiore di Studi Avanzati (SISSA), via Bonomea~265, 34136 Trieste (TS), Italy}
\email{giorgio.stefani.math@gmail.com}

\date{\today}

\keywords{Euler equations, weak-strong uniqueness, inviscid limit, Orlicz spaces.}

\subjclass[2020]{Primary 35Q31. Secondary 76B03, 76D05.}

\thanks{\textit{Acknowledgments}.
The authors thank Stefano Spirito and Emil Wiedemann for many precious comments on a preliminary version of this paper. 
The first two authors are partially supported by the 2015 ERC Grant 676675 FLIRT--Fluid Flows and Irregular Transport. The third author is a member of Istituto Nazionale di Alta Matematica (INdAM), Gruppo Nazionale per l'Analisi Matematica, la Probabilità e le loro Applicazioni (GNAMPA), is partially supported by the INdAM--GNAMPA 2022 Project \textit{Analisi geometrica in strutture subriemanniane}, codice CUP\_E55\-F22\-000\-270\-001, by the INdAM--GNAMPA 2023 Project \textit{Problemi variazionali per funzionali e operatori non-locali}, codice CUP\_E53\-C22\-001\-930\-001, and has received funding from the European Research Council (ERC) under the European Union’s Horizon 2020 research and innovation program (grant agreement No.~945655).
}

\begin{abstract}
In the class of admissible weak solutions, we prove a weak-strong uniqueness result for the incompressible Euler equations assuming that the symmetric part of the gradient belongs to $L^1_{\rm loc}([0,+\infty);L^{\exp}(\R^d;\R^{d\times d}))$, where $L^{\exp}$ denotes the Orlicz space of exponentially integrable functions. Moreover, under the same assumptions on the limit solution to the Euler system, we obtain the convergence of vanishing-viscosity Leray--Hopf weak solutions of the Navier--Stokes equations.
\end{abstract}

\maketitle

\section{Introduction}

The present work is devoted to the analysis of the Euler and the Navier--Stokes equations in the context of incompressible fluids. Despite their importance in modeling several natural phenomena, their rigorous mathematical study remains vastly incomplete. Indeed, even though these equations were proposed hundreds of years ago, major questions such as existence and smoothness of solutions presently remain extremely challenging open problems.   

In this paper, we focus on the uniqueness of solutions of the Euler equations and on the inviscid limit of solutions of the Navier--Stokes equations. Our results should be  compared with the ones obtained in~\cite{MR08}.
In fact, our approach allows us not only to upgrade the uniqueness result proved in~\cite{MR08} to a \emph{weak-strong} one, but also to weaken the hypothesis on the spatial regularity of the gradient of the velocity, actually noticing that such an assumption is needed on its symmetric part only.

In the next part of this introduction, we summarize our main results together with a brief overview of the (at the best of the authors' knowledge) sharpest  achievements already known in the literature on this topic. We will state our results in the whole space~$\R^d$ but we remark that the very same proofs apply on the $d$-dimensional torus~$\mathbb{T}^d$ with straightforward modifications. 

\subsection{Euler equations}
For a given time $T\in(0,+\infty)$, the \emph{incompressible Euler equations} are the following system of partial differential equations
\begin{equation}
\label{euler}
\tag{$\mathbf{E}$}
    \begin{cases}
    \partial_t u + \divergence{(u \otimes u)} + \nabla p = 0 & \text{in }  \R^d\times (0,T),
    \\ \divergence{u} = 0 &\text{in }  \R^d\times (0,T),
    \\ u(\,\cdot\,,0) = u_0 &\text{in } \R^d,
    \end{cases} 
\end{equation}
where $u\colon\R^d\times [0,T]\to\R^d$ represents the velocity of the fluid, $p\colon \R^d\times [0,T]\to\R$ its hydrodynamic pressure and $u_0\colon\R^d\to \R^d$ is any given initial divergence-free vector field.

We recall the standard notion of weak solution of the system~\eqref{euler}, which can be formally obtained by multiplying the first two equations of~\eqref{euler} by two (regular enough) test functions~$\varphi$ and~$q$ and integrating by parts. 

\begin{Def}[Weak solution] 
\label{euler system}
Let $u_0 \in L^2(\R^d; \R^d)$ be such that $\divergence{u_0}=0$.
We say that a function $u \in L^\infty([0, T]); L^2(\R^d; \R^d))$ is a \emph{weak solution} of the incompressible Euler equations~\eqref{euler} with initial datum $u_0$ if   
\begin{equation*}
    \int_0^\tau \int_{\R^d} \left(u \cdot \partial_t \phi+  u\otimes u \colon \nabla  \phi \right)\,dx \, dt =  \int_{\R^d} u(x, \tau) \cdot \phi(x, \tau)\, dx - \int_{\R^d} u_0(x) \cdot \phi(x, 0)\, dx
\end{equation*}
and
\begin{equation*}
    \int_{\R^d} u(x,\tau) \cdot \nabla q(x) \, dx = 0
\end{equation*}
for a.e.\ $\tau \in (0, T)$, whenever  
\begin{equation*}
\phi \in \text{Lip}(\R^d\times (0,T);\R^d)\cap \text{Lip}((0,T);L^2(\R^d;\R^d)\cap W^{1,\infty}(\R^d;\R^d)),
\qquad
\divergence{\phi}=0,
\end{equation*}
and $q \in W^{1,2}(\R^d)$.  
\end{Def} 

Note that the regularity assumptions on the test functions~$\varphi$ and~$q$ guarantee that all the terms involved in the weak formulation are well defined. We also remark that, by standard density arguments, the definition we used readily follows from the usual definition in which $\varphi$ and $q$ are assumed to be smooth. 

The existence of weak solutions of~\eqref{euler} has been proved in~\cite{W11}, while their uniqueness is known to fail~\cites{Sch93,Is18}, even from every smooth initial datum~\cite{DH21}. 
Non-uniqueness results in the presence of an external forcing term have been proved by Vishik in the seminal papers~\cites{V1,V2} (also see~\cite{ABCDGJK22}).

\begin{rem}[Weak solutions are $C({[}0,T{]};w-L^2)$]
\label{rem:continuity_w_L2}
Every weak solution 
\begin{equation*}
u \in L^\infty([0,T];L^2(\R^d;\R^d))
\end{equation*}
as in \cref{euler system} can be redefined on a negligible set of times in order to guarantee that 
\begin{equation*}
u \in C([0,T];w-L^2(\R^d;\R^d)),
\end{equation*}
that is, $u$ is continuous in time with values in $L^2$ endowed with the weak topology.
For a proof of this statement, we refer the reader to~\cite{DS10}*{Appendix~A} for example.  We also remark that the same result holds true for weak solutions of the Navier--Stokes equations~\eqref{navierstokes} as defined in \cref{navierstokes system} below.
\end{rem}

Throughout this note, we will always deal with the $L^2$ weakly continuous representative of the weak solution under consideration. This will improve the readability of some statements.  

\subsection{Admissible weak solutions}
Since the notion of weak solution given above in \cref{euler system} is far from giving the well-posedness of the Euler system~\eqref{euler}, one usually restricts the class of weak solutions by imposing some additional constraints. The most frequently considered is to assume that, at a.e.\ time, the kinetic energy of the weak solution is below the one of the initial datum.

\begin{Def}[Admissible weak solution] 
\label{admissible solution}
Let $u \in L^\infty([0,T];L^2(\R^d;\R^d))$ be a weak solution of the Euler equations~\eqref{euler} with initial datum $u_0 \in L^2(\R^d;\R^d)$, $\divergence u_0=0$. We say that $u$ is \emph{admissible} if 
\begin{equation}
     \int_{\R^d} \abs{u(x,t)}^2 \, dx \leq \int_{\R^d} \abs{u_0(x)}^2 \, dx
\label{admissible energy inequality}
\end{equation}
for a.e.\  $t \in (0, T)$.
\end{Def}

In virtue of \cref{rem:continuity_w_L2} and by lower semicontinuity of the norm with respect to the weak convergence, up to possibly redefining the weak solution~$u$ on a negligible set of times, the energy inequality in~\eqref{admissible energy inequality} is actually valid for every time $t>0$.\\

The notion of admissible weak solution given in \cref{admissible solution} above plays a very important physical role, since it prohibits solutions to generate kinetic energy from nowhere. However, the lack of compactness for the Euler system~\eqref{euler} (due to the absence of viscosity) does not allow to prove the existence of admissible weak solutions starting from a general divergence-free initial datum $u_0\in L^2(\R^d;\R^d)$. Moreover, in virtue of~\cites{BDSV19,DRS21}, uniqueness still fails in the class of admissible weak solutions. 

\subsection{Weak-strong uniqueness for admissible weak solutions}
The lack of uniqueness for admissible weak solutions of the Euler system~\eqref{euler} motivates the current research on \emph{weak-strong} uniqueness results, that is, two admissible weak solutions of~\eqref{euler} starting from the same divergence-free initial datum must coincide if at least one of them satisfies suitable additional regularity assumptions.
It is known that, in the class of admissible weak solutions, it is sufficient to assume that one solution satisfies $u\in C^1_{x,t}$ with symmetric gradient such that $\nabla^s u\in L^1_t(L^\infty_x)$. We refer the interested reader to~\cite{W18} for a proof of this statement, together with a more detailed overview of the classical results in this direction.

Building on a logarithmic interpolation inequality between the (real) \emph{Hardy space} $\mathcal{H}^1(\R^d)$ and the \emph{logarithmic Zygmund space} $L \log L(\R^d)$, in~\cite{MR08}*{Theorem~1.2} the authors prove that uniqueness of weak solutions of the Euler system~\eqref{euler} still holds if, for some $\sigma>0$, $u\in L^\infty_t(L^{2+\sigma}_x)$ and $\nabla u\in L^1_t(\BMO_x)$, thus relaxing to $\BMO$ the standard $L^\infty$ assumption on the spatial regularity of the velocity gradient. 
Motivated by the uniqueness result of~\cite{MR08} and having in mind the results achieved in~\cite{BDS11} in the more general class of \emph{measured-valued} weak solutions of the Euler system~\eqref{euler}, our first result upgrades~\cite{MR08}*{Theorem~1.2} to a weak-strong uniqueness result under the weaker assumption that $\nabla^s U\in L^1_t(L^{\exp}_x)$.
As a matter of fact, we prove the same logarithmic inequality of~\cite{MR08}*{Theorem~1.1} in this more general framework. Here $L^{\exp}$ denotes the Orlicz space of exponentially integrable functions as defined in \cref{sec:prel} below.

In order to state our first main result, we introduce the space of admissible weak solutions we will employ throughout our paper.

\begin{Def}[The space $X_{u_0}$]
\label{X-u_0}
Let $u_0 \in L^2(\R^d;\R^d)$ be such that $\divergence{u_0}=0$.
We say that $u$ belongs to the space $X_{u_0}$ if $u \in L^\infty([0,T];L^2(\R^d;\R^d))$ is an admissible weak solution to~\eqref{euler} and there exists $\sigma>0$ such that $u \in L^\infty([0,T];L^{2+\sigma}(\R^d;\R^d))$.
\end{Def}

We are now ready to state our first main result concerning weak-strong uniqueness of admissible solutions of the Euler system~\eqref{euler}.

\begin{thm}[Weak-strong uniqueness] 
\label{t:main1}
Let $u_0 \in L^2(\R^d;\R^d)$ be such that $\divergence{u_0}=0$ and let $u, U \in X_{u_0}$.
If
\begin{equation*}
\nabla^s U \in L^1([0,T];L^{\exp}(\R^d;\R^d)),
\end{equation*}
then $u = U$.
\end{thm}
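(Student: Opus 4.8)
The plan is to run the classical relative-energy (weak-strong uniqueness) argument, with the $L^\infty$ Gr\"onwall control of the error replaced by a logarithmic Gr\"onwall inequality coming from an Orlicz-type interpolation estimate. First I would introduce the relative energy
\begin{equation*}
E(t) := \frac12 \int_{\R^d} \abs{u(x,t) - U(x,t)}^2\,dx,
\end{equation*}
and, using that both $u$ and $U$ are admissible weak solutions from the same initial datum together with the $L^2$-weakly-continuous representative of \cref{rem:continuity_w_L2}, derive the standard relative-energy inequality
\begin{equation*}
E(\tau) \le -\int_0^\tau \int_{\R^d} (u - U)\otimes(u - U) : \nabla^s U \,dx\,dt
\end{equation*}
for a.e.\ $\tau$ (the symmetric part of $\nabla U$ appears because $(u-U)\otimes(u-U)$ is symmetric). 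Justifying this inequality requires using $U$ (suitably mollified) as a test function in the weak formulation of $u$ and vice versa, plus the two energy inequalities; the mollification is where the $L^\infty_t L^{2+\sigma}_x$ integrability from the space $X_{u_0}$ enters, so that the nonlinear terms make sense and pass to the limit. The quadratic structure $\int |u-U|^2$ on the left and the fact that the pressure terms drop by the divergence-free constraint are the usual features.

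The core new estimate is to bound the right-hand side by something like $C(t)\, E(t)\log\!\big(e + \tfrac{C}{E(t)}\big)$. Writing $w := u - U$, we must control $\int_{\R^d} \abs{w}^2 \abs{\nabla^s U}\,dx$. Here I would invoke the duality between $L^{\exp}$ and $L\log L$ (the Orlicz spaces defined in \cref{sec:prel}): since $w\otimes w$ is bounded in $L^1$ (with a bound controlled by the $L^2$ norms of $u,U$, hence by the initial energy) and, being a product of two $L^2$ functions, actually lies in $L\log L$ with a quantitative estimate, one gets
\begin{equation*}
\int_{\R^d} \abs{w}^2\abs{\nabla^s U}\,dx \le C \,\norm{\abs{w}^2}_{L\log L}\,\norm{\nabla^s U}_{L^{\exp}}.
\end{equation*}
The point — this is precisely the analogue of the logarithmic interpolation inequality of \cite{MR08}*{Theorem~1.1} that the authors claim to reprove in this framework — is that $\norm{\abs{w}^2}_{L\log L}$ can be estimated by $\norm{w}_{L^2}^2 \log\!\big(e + \norm{w}_{L^2+\sigma}/\norm{w}_{L^2}\big)$ or a similar expression, using that $w$ is uniformly bounded in $L^{2+\sigma}$. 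This turns the relative-energy inequality into a closed differential inequality for $E$.

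Finally I would close the argument by Osgood's lemma (the Gr\"onwall lemma for the modulus of continuity $r\mapsto r\log(e+C/r)$, which fails the Lipschitz condition only mildly): since $E(0)=0$ and
\begin{equation*}
E(\tau) \le \int_0^\tau f(t)\, E(t)\,\log\!\Big(e + \frac{C}{E(t)}\Big)\,dt
\end{equation*}
with $f \in L^1([0,T])$ (this integrability is exactly the hypothesis $\nabla^s U\in L^1_t L^{\exp}_x$, after absorbing the uniform $L^2\cap L^{2+\sigma}$ bounds into the constant), Osgood's uniqueness criterion forces $E\equiv 0$, i.e.\ $u = U$ on $[0,T]$. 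I expect the main obstacle to be the rigorous derivation of the relative-energy inequality at the level of regularity available — in particular making the mollified test-function computation legitimate and controlling the commutator/mollification errors in the nonlinear term using only $u,U \in L^\infty_t(L^2\cap L^{2+\sigma})_x$ — rather than the functional-analytic interpolation step, which, while the technical heart of the improvement over $\BMO$, is morally a duality computation in Orlicz spaces. A secondary subtlety is ensuring the logarithmic term genuinely has the quantitative form needed for Osgood (e.g.\ that the argument inside the $\log$ degenerates like $1/E$ and not worse), which is where the precise definition of the $L^{\exp}$ norm and the choice of normalization in \cref{sec:prel} matter.
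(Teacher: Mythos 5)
Your outline shares the paper's two-stage architecture (relative energy inequality; logarithmic Gr\"onwall), but there is a gap in the middle and a genuine divergence at the end. The gap: the claim that ``$w\otimes w$, being a product of two $L^2$ functions, actually lies in $L\log L$'' is false — a product of two $L^2$ functions is merely $L^1$, and you genuinely need the $L^{2+\sigma}$ hypothesis. Even with it, the quantitative bound you want, of the type $\norm{\,\abs{w}^2}_{L\log L}\le C\norm{w}_{L^2}^2\log\big(e+\norm{w}_{L^{2+\sigma}}/\norm{w}_{L^2}\big)$, is precisely the nontrivial content of Step~1 of the paper's proof of \cref{t:main1} (the decomposition $\alpha=\alpha_m^++\alpha_m^-$ at level~$m$, then \cref{orlicz-holder} applied to the bounded piece $\alpha_m^-$ and Chebyshev plus H\"older applied to $\alpha_m^+$); it is not a one-line Orlicz duality, since $L^1\cap L^{1+\sigma/2}$ does not map into $L\log L$ without the truncation, and to collapse the pair of terms $m^{-\theta}$ and $E(t)\log m$ that the truncation produces into the single Osgood modulus $E\,(1+\abs{\log E})$ you must choose the threshold $m=m(t)$ comparable to $E(t)^{-1/\theta}$. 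That optimization is correct but is exactly the missing technical heart, which you describe as ``morally a duality computation'' while underestimating it.

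Where you diverge productively from the paper is the closing step. The paper keeps $m$ fixed, runs a \emph{linear} Gr\"onwall on a short interval $[0,t_0]$ with $\int_0^{t_0}f<\theta/4$, sends $m\to\infty$ only at the end, and then iterates over subsequent intervals of length $t_0$; in their presentation the iteration requires \cref{energyconsforU} (energy conservation for $U$) to transfer the admissibility of $u$ forward in time. Your route, once the $t$-dependent threshold is carried out, produces the closed inequality $E(\tau)\le\int_0^\tau f(t)E(t)\big(1+\abs{\log E(t)}\big)\,dt$ on all of $[0,T]$ in one pass, and Osgood's lemma finishes it globally with no iteration and no appeal to the energy conservation result. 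The trade-off is only that Osgood is a slightly less elementary tool than a constant-rate Gr\"onwall; what it buys is a cleaner global argument. So: same relative-energy backbone as the paper, same logarithmic estimate underneath (once you flesh it out), but a genuinely different and somewhat more economical conclusion.
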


The proof of \cref{t:main1}, similarly to that of~\cite{MR08}*{Theorem 1.2}, is based on the \emph{relative energy method}, that is, we study the behavior of the \emph{relative energy} 
\begin{equation}\label{en_rel_ineq}
   E_{\rel}(\tau) = \frac{1}{2} \int_{\R^d} \abs{u(x,\tau) - U(x, \tau)}^2 \, dx,
\qquad
\tau\ge0,
\end{equation}
where $u,U\in X_{u_0}$ are as in the statement of \cref{t:main1}. 
The key observation of this method is that, if~$U$ can be used as a test function in the weak formulation of~$u$, then we find that
\begin{equation}
\label{dis_bella}
 E_{\rel}(\tau) \leq \int_0^\tau \int_{\R^d} \nabla^s U \colon (U-u) \otimes (u-U) \,dx \,dt
\end{equation} 
for $\tau>0$.

Once inequality~\eqref{dis_bella} is established, the fact that $E_{\rel} \equiv 0$ then follows via a standard Gr\"onwall-type argument. 
For a more detailed account on this approach, we refer the interested reader to~\cite{W18} and to the references therein.

It is worth noticing that the usual proof of  inequality~\eqref{en_rel_ineq} requires suitable regularity assumptions on $U$ and, in the standard setting, one typically assumes that $U\in C^1_{x,t}$ as already mentioned before, see~\cite{W18}*{Theorem~11.1}.

In our setting, $U$ may not satisfy such a strong regularity.
To overcome this issue, in the inequality~\eqref{dis_bella} we replace $U$ by its space regularization~$U_{\varepsilon}$, with~$\e>0$.
However, due to the non-linearity of the Euler equations~\eqref{euler}, this replacement inevitably generates some error terms that must be taken under control. At this point, with some careful manipulations, we prove that an exponential integrability assumption only on the symmetric part of the gradient $\nabla^s U$ is enough to guarantee that all the error terms vanish in the limit as $\e\to 0^+$. It is worth to mention that, to get uniqueness, the hypothesis on the only symmetrical part of the gradient also appears in the compressible context \cite{FGJ19}.  
Let us also mention that adding an external forcing term $F$ does not affect the validity of \cref{t:main1}, as well as of \cref{weak-strong uniqueness bis} below, provided that one assumes the right integrability. 
See \cref{R:ext_force} for a more extensive discussion. 

As a by-product of our approach, we also give a simpler proof of~\cite{BDS11}*{Theorem~2} when the class of measure-valued admissible weak solutions is upgraded to the usual notion of admissible weak solutions as in \cref{euler system}. In this case, one does not even need to consider solutions with the spatial integrability $L^{2+\sigma}$ for some $\sigma>0$, since the standard notion of finite energy solutions is enough.

\begin{thm}[Standard weak-strong uniqueness]
\label{weak-strong uniqueness bis}
Let $u_0 \in L^2(\R^d;\R^d)$ be such that $\divergence u_0=0$ and let $u, U$ be two admissible weak solutions with initial condition $u_0$.
If 
\begin{equation*}
\nabla^s U \in L^1([0,T];L^\infty(\R^d;\R^{d\times d})),
\end{equation*}
then $u=U$. 
\end{thm}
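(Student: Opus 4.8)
The plan is to run the relative energy method, following exactly the scheme used for \cref{t:main1} but replacing the logarithmic Hardy--Zygmund inequality by the elementary bound $\norm{\nabla^s U_\e}_{L^\infty}\le\norm{\nabla^s U}_{L^\infty}$, which also makes the full strength of the space $X_{u_0}$ unnecessary. Since $U$ need not be Lipschitz in space under our hypothesis, it cannot be inserted directly into \cref{euler system}; so, as for \cref{t:main1}, I would fix a standard even spatial mollifier $\rho_\e$, write $(\,\cdot\,)_\e$ for the convolution with $\rho_\e$ in the space variable, and set $U_\e := U*\rho_\e$. Then $U_\e$ is divergence-free and, all terms in the weak formulation being well defined, admissible as a test function for $u$, and since $\divergence$ and the Leray projector $\mathbb{P}$ commute with the convolution one has $\partial_t U_\e = -\mathbb{P}\divergence\big((U\otimes U)_\e\big)$; using $\divergence u = 0$ this gives $\int_{\R^d}u\cdot\partial_t U_\e\,dx = -\int_{\R^d}u\cdot\divergence\big((U\otimes U)_\e\big)\,dx$.

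First I would take $\phi = U_\e$ in the weak formulation of $u$, write $\divergence\big((U\otimes U)_\e\big) = (U_\e\cdot\nabla)U_\e + \divergence R_\e$ with the commutator $R_\e := (U\otimes U)_\e - U_\e\otimes U_\e$ (using $\divergence U_\e = 0$), rearrange the integrand into $\nabla U_\e\colon(u-U_\e)\otimes u - u\cdot\divergence R_\e$, drop $\int_{\R^d}\nabla U_\e\colon(u-U_\e)\otimes U_\e\,dx$ via $\divergence(u-U_\e)=0$, and replace $\nabla U_\e$ by $\nabla^s U_\e$ by symmetry of $(u-U_\e)\otimes(u-U_\e)$. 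This yields
\begin{equation*}
\int_{\R^d} u(\tau)\cdot U_\e(\tau)\,dx - \int_{\R^d} u_0\cdot (u_0)_\e\,dx = \int_0^\tau\!\!\int_{\R^d}\nabla^s U_\e\colon(u-U_\e)\otimes(u-U_\e)\,dx\,dt - \int_0^\tau\!\!\int_{\R^d} u\cdot\divergence R_\e\,dx\,dt .
\end{equation*}
Combining this with the admissibility bounds $\norm{u(\tau)}_{L^2},\norm{U(\tau)}_{L^2}\le\norm{u_0}_{L^2}$, expanding $E_{\rel}(\tau)$ as in \eqref{en_rel_ineq}, and using $U_\e(\tau)\to U(\tau)$, $(u_0)_\e\to u_0$ in $L^2$, I would arrive at
\begin{equation*}
E_{\rel}(\tau)\le -\int_0^\tau\!\!\int_{\R^d}\nabla^s U_\e\colon(u-U_\e)\otimes(u-U_\e)\,dx\,dt + \int_0^\tau\!\!\int_{\R^d} u\cdot\divergence R_\e\,dx\,dt + o_\e(1) .
\end{equation*}

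It then remains to pass to the limit as $\e\to0^+$. For the first term one uses $\norm{\nabla^s U_\e(t)}_{L^\infty}\le\norm{\nabla^s U(t)}_{L^\infty}$ and $(u-U_\e)\otimes(u-U_\e)\to(u-U)\otimes(u-U)$ in $L^1(\R^d)$ for a.e.\ $t$, with dominating function $C\norm{u_0}_{L^2}^2\,\norm{\nabla^s U(t)}_{L^\infty}\in L^1(0,T)$, to get convergence to $\int_0^\tau\int_{\R^d}\nabla^s U\colon(U-u)\otimes(u-U)$. The commutator term is the crucial point. I would use the Constantin--E--Titi commutator identity together with the pointwise identity, valid for any divergence-free $v$,
\begin{equation*}
R_\e = \int_{\R^d}\rho_\e(y)\,\delta_y U\otimes\delta_y U\,dy - \delta_\e U\otimes\delta_\e U, \qquad \divergence(v\otimes v) = (v\cdot\nabla)v = 2\,(\nabla^s v)\,v - \tfrac12\nabla\abs{v}^2 ,
\end{equation*}
where $\delta_y U(x):=U(x-y)-U(x)$ and $\delta_\e U:=U_\e-U$, applied with $v=\delta_y U$ and $v=\delta_\e U$: since $\nabla^s(\delta_y U) = (\nabla^s U)(\,\cdot\,-y) - \nabla^s U$ and $\nabla^s(\delta_\e U) = (\nabla^s U)_\e - \nabla^s U$ are both bounded by $2\norm{\nabla^s U(t)}_{L^\infty}$, this exhibits $\divergence R_\e$ as a combination of terms of the form $(\text{increment of }\nabla^s U)\cdot(\text{increment of }U)$ plus a pure gradient; the gradient vanishes against $u$ because $\divergence u = 0$, and, with $g_\e(x) := \int_{\R^d}\rho_\e(y)\abs{U(x-y)-U(x)}\,dy$, one is left with
\begin{equation*}
\abs*{\int_{\R^d} u\cdot\divergence R_\e\,dx}\le C\,\norm{\nabla^s U(t)}_{L^\infty}\norm{u(t)}_{L^2}\norm{g_\e(t)}_{L^2} .
\end{equation*}
Since $\norm{g_\e(t)}_{L^2}\le 2\norm{U(t)}_{L^2}$ and, by Minkowski's inequality and $L^2$-continuity of translations, $\norm{g_\e(t)}_{L^2}\to 0$ for a.e.\ $t$, the right-hand side is dominated by $C\norm{u_0}_{L^2}^2\,\norm{\nabla^s U(t)}_{L^\infty}\in L^1(0,T)$ and vanishes pointwise, so $\int_0^\tau\int_{\R^d} u\cdot\divergence R_\e\,dx\,dt\to 0$ by dominated convergence.

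Putting the two limits together yields precisely \eqref{dis_bella}, and since $\nabla^s U\colon(U-u)\otimes(u-U)\le\norm{\nabla^s U}_{L^\infty}\abs{u-U}^2$ pointwise this becomes $E_{\rel}(\tau)\le 2\int_0^\tau\norm{\nabla^s U(t)}_{L^\infty}E_{\rel}(t)\,dt$; then Gr\"onwall's lemma, together with $E_{\rel}(0)=0$ and $\int_0^T\norm{\nabla^s U(t)}_{L^\infty}\,dt<\infty$, forces $E_{\rel}\equiv 0$, that is $u=U$. The hard part is the commutator term $\int_0^\tau\int u\cdot\divergence R_\e$: the bare fact that $R_\e\to 0$ in $L^1$ is not enough, since the divergence costs a derivative and $u$ carries no regularity, so the usual Onsager-type commutator estimates (which would demand some Besov regularity of $U$) are unavailable. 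The point that makes everything work is the identity $\divergence(v\otimes v)=2(\nabla^s v)v-\tfrac12\nabla\abs{v}^2$, which trades that derivative for increments of $\nabla^s U$ alone --- exactly the quantity assumed bounded. This is the structural reason why only the symmetric part of $\nabla U$ enters the hypothesis, and the same mechanism, transposed to the $L^{\exp}$ scale and combined with the logarithmic inequality, is what drives \cref{t:main1}.
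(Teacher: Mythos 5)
Your overall strategy is exactly the one the paper has in mind: the relative energy method with a mollified test function $U_\e$, the reduction to a single Gr\"onwall inequality (no iteration is needed here since $\norm{\nabla^s U}_{L^\infty}$ dominates $E_{\rel}$ directly), and the key structural observation that the problematic commutator term can be rewritten so that only $\nabla^s U$, never the full $\nabla U$, appears. The paper omits the proof, telling the reader to adapt the argument for \cref{t:main1}; that adaptation would apply \cref{third term} to $U$ itself (now with exponents $L^2$, $L^\infty$) and then estimate the resulting commutator $U_\e\cdot\nabla^s U_\e-(U\cdot\nabla^s U)_\e$ in $L^2_x$, exactly as in the proof of \cref{relative energy estimate}. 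You instead invoke the Constantin--E--Titi decomposition and apply the differential identity $\divergence(v\otimes v)=2(\nabla^s v)v-\tfrac12\nabla|v|^2$ to the increments $v=\delta_y U$ and $v=\delta_\e U$; this is a genuinely different algebraic packaging which leads to the same kind of bound $C\norm{\nabla^s U}_{L^\infty}\norm{u}_{L^2}\norm{g_\e}_{L^2}$, and the final step is correct.

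However, there is a real gap in the sentence ``the gradient vanishes against $u$ because $\divergence u=0$.'' The gradients you discard, $\nabla\abs{\delta_y U}^2$ and $\nabla\abs{\delta_\e U}^2$, are a priori only distributions: they involve the \emph{full} gradient $\nabla U$, which is not controlled by your hypotheses. Since $u$ is merely $L^2$, the weak incompressibility constraint is only known against test functions $q\in W^{1,2}(\R^d)$, and neither $\int\rho_\e(y)\abs{\delta_y U}^2\,dy$ nor $\abs{\delta_\e U}^2$ is individually in $W^{1,2}$ (they are only $L^1$). To close this you must observe that the \emph{combination} $Q_\e:=\tfrac12\int\rho_\e(y)\abs{\delta_y U}^2\,dy-\tfrac12\abs{\delta_\e U}^2$ satisfies, as a distribution, $\nabla Q_\e=F_\e-\divergence R_\e$, where $F_\e$ (your ``good'' term) is in $L^2_x$ and $\divergence R_\e$ is in $L^2_x$ for each fixed $\e>0$ because $R_\e$ is smooth; then $Q_\e\in L^1\cap \dot W^{1,2}$ and a Gagliardo--Nirenberg interpolation gives $Q_\e\in L^2$, hence $Q_\e\in W^{1,2}$ and the weak formulation can be invoked. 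This step is not ``because $\divergence u=0$'' alone and needs to be spelled out. The paper's intended route (applying \cref{third term} to $U$ and mollifying everything) avoids this subtlety automatically: the gradients that are added and subtracted there, namely $\nabla(|U_\e|^2/2)$ and $\nabla(|U|^2/2)_\e$, are already in $W^{1,2}$ because one factor has been regularized.
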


In \cref{t:main1} and \cref{weak-strong uniqueness bis} above we assumed the solution $U$ to be admissible. In fact, under the respective  assumptions on the symmetric part of the gradient of the weak solution, one can prove the conservation of the energy, as stated in the following result. 

\begin{prop}[Energy conservation] \label{energy conservation for U}
Let $U \in L^\infty([0,T];L^2(\R^d;\R^d))$ be a weak solution of~\eqref{euler}.
If either
\begin{equation}
\label{energy cons exp}
    U\in L^\infty([0,T];L^{2+\sigma}(\R^d;\R^d))\ \text{for some}\  \sigma>0,\ 
    \nabla^s U \in L^1([0,T];L^{\exp}(\R^d;\R^{d\times d})),
\end{equation}
or
\begin{equation}
\label{energy cons standard}
U \in L^\infty([0, T]; L^2(\R^d; \R^d)),\ \nabla^s U \in L^1([0, T]; L^\infty(\R^d; \R^{d\times d})),
\end{equation}
then $U$ preserves the energy. 
\end{prop}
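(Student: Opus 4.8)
The plan is to show that $U$ may legitimately be used as a test function in its own weak formulation, which formally yields $\frac{d}{d\tau}\frac12\|U(\tau)\|_{L^2}^2 = 0$ and hence energy conservation. The obstruction is precisely the regularity of $U$: the weak formulation in \cref{euler system} requires Lipschitz test functions, so we must argue by mollification. Fix a standard spatial mollifier and set $U_\e = U * \eta_\e$. Since $U_\e$ is smooth in space, divergence-free, and one checks from the equation that $\partial_t U_\e$ has enough regularity (it equals $-\divergence(u\otimes u)_\e - \nabla p_\e$ tested against $U_\e$), it is an admissible test function. Plugging $\phi = U_\e$ into the weak formulation of $U$ and also mollifying the equation and pairing with $U$ — equivalently, applying the commutator/symmetry structure of the relative energy computation to $u=U$ — we arrive, exactly as in the derivation of~\eqref{dis_bella} with $u$ replaced by $U$, at an identity of the form
\begin{equation*}
\frac12\int_{\R^d}\abs{U(x,\tau)}^2\,dx - \frac12\int_{\R^d}\abs{u_0(x)}^2\,dx = -\int_0^\tau\int_{\R^d} \big(u\otimes u\big) \colon \nabla U_\e \,dx\,dt + R_\e,
\end{equation*}
with an error term $R_\e\to 0$; since the left-hand side does not depend on $\e$, it suffices to control the right-hand side in the limit $\e\to0^+$.

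The key point — and this is where the hypotheses on $\nabla^s U$ enter — is to rewrite the commutator error using the identity
\begin{equation*}
\int_{\R^d}(U\otimes U)\colon\nabla U_\e\,dx - \int_{\R^d}(U\otimes U)_\e\colon\nabla U\,dx
\end{equation*}
and the fact that, by divergence-freeness and symmetry of $U\otimes U$, the antisymmetric part of $\nabla U_\e$ gives no contribution, so that only $\nabla^s U_\e = (\nabla^s U)_\e$ appears. One then estimates the resulting error by a bound of the form
\begin{equation*}
\int_0^\tau \norm{\nabla^s U(\cdot,t)}_{L^{\exp}} \,\omega_\e(t)\,dt,
\end{equation*}
where $\omega_\e(t)$ collects the modulus of continuity of $U(\cdot,t)$ in the dual Orlicz space $L\log L$ (together with the $L^{2+\sigma}$ bound, via the logarithmic interpolation inequality announced after~\cite{MR08}*{Theorem~1.1}). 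Under hypothesis~\eqref{energy cons exp} this is exactly the mechanism already used in the proof of \cref{t:main1}, applied with $u=U$: all error terms vanish as $\e\to0^+$ because $\nabla^s U\in L^1_t(L^{\exp}_x)$ and $U\in L^\infty_t(L^{2+\sigma}_x)$, so the limiting identity reads $\frac12\|U(\tau)\|_{L^2}^2=\frac12\|u_0\|_{L^2}^2$. Under the stronger hypothesis~\eqref{energy cons standard} the same argument is much simpler: $\nabla^s U\in L^1_t(L^\infty_x)$ directly dominates the commutator, which tends to $0$ by the standard mollification estimate, and no auxiliary integrability on $U$ is needed.

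The main obstacle is therefore not conceptual but the bookkeeping of the error terms: one must verify that the \emph{same} commutator estimates that prove~\eqref{dis_bella} for the pair $(u,U)$ close on the diagonal $u=U$, i.e.\ that replacing the weak solution $u$ by the regular-on-average solution $U$ does not introduce any term that fails to be absorbed. Concretely, the delicate step is showing $\int_0^\tau\int_{\R^d}\big(\nabla^s U_\e - \nabla^s U\big)\colon U\otimes U\,dx\,dt\to 0$; this follows because $(\nabla^s U)_\e\to\nabla^s U$ strongly in $L^1_t(L^{\exp}_x)$ (or in $L^1_t(L^\infty_x)$ in the second case — here one must be slightly careful, as mollification need not converge in $L^\infty$, so one instead keeps $\nabla^s U_\e$ and moves the mollifier onto $U\otimes U$, using $U\in L^2$ and $\nabla^s U\in L^1_t L^\infty_x$). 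Once this is granted, both cases reduce to the Gronwall-free identity $E_{\rm rel}\equiv 0$ with $u\equiv U$, i.e.\ to the trivial statement, and what survives is precisely the energy equality. We conclude that $U$ conserves its energy, and in particular the admissibility hypothesis assumed in \cref{t:main1} and \cref{weak-strong uniqueness bis} is automatically satisfied.
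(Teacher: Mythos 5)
Your overall strategy — mollify in space, derive a local energy balance with a commutator remainder, use the symmetry of $R^U_\e = U_\e\otimes U_\e - (U\otimes U)_\e$ to replace $\nabla U_\e$ by $\nabla^s U_\e$, and then pass to the limit — is the same as the paper's. But there are two concrete errors in how you propose to close the argument, and both concern the case~\eqref{energy cons exp}.

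First, you claim that the commutator term is estimated via the logarithmic interpolation inequality (\cref{orlicz-holder}) and that this is ``exactly the mechanism already used in the proof of \cref{t:main1}.'' This is not what happens here, and it would not work: applying \cref{orlicz-holder} to $\nabla^s U_\e$ and $R^U_\e$ would require control of $\|R^U_\e\|_{L^\infty_x}$, which is unavailable under the mere hypothesis $U\in L^\infty_t L^{2+\sigma}_x$. The paper's proof of \cref{energyconsforU} deliberately avoids the log-interpolation tool and uses the much simpler observation that $L^{\exp}(\R^d)\hookrightarrow L^{(1+\sigma/2)'}(\R^d)$ (equation~\eqref{exp_in_p}), so that plain H\"older between $L^{1+\sigma/2}_x$ and $L^{(1+\sigma/2)'}_x$ suffices: $\|R^U_\e(t)\|_{L^{1+\sigma/2}_x}\to 0$ a.e.\ in $t$ because $U(\cdot,t)\in L^{2+\sigma}_x$, while $\|\nabla^s U_\e(t)\|_{L^{(1+\sigma/2)'}_x}\le C(\sigma)\|\nabla^s U(t)\|_{L^{\exp}_x}\in L^1_t$, and one concludes by dominated convergence. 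The log-interpolation estimate is used only in the relative-energy argument for \cref{t:main1}, where the structure of the integrand is genuinely $\nabla^s U$ against $|u-U|^2$ and a finite-$p$ H\"older bound would fail to give a Gr\"onwall-closable inequality.

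Second, and more seriously, your ``delicate step'' relies on the assertion that $(\nabla^s U)_\e\to\nabla^s U$ strongly in $L^1_t(L^{\exp}_x)$. This is false in general: the Young function $\psi(s)=e^s-1$ does not satisfy the $\Delta_2$-condition, so smooth compactly supported functions are not dense in $L^{\exp}$ and mollification is \emph{not} strongly continuous in the Luxemburg norm. (You correctly flag this issue for the $L^\infty$ case~\eqref{energy cons standard}, but the same obstruction is present for $L^{\exp}$, and you do not address it.) The paper never needs any such convergence: it only uses convergence in $L^{(1+\sigma/2)'}_x$ (for finite exponent, where mollification does converge) together with the uniform-in-$\e$ bound $\|\nabla^s U_\e(t)\|_{L^{(1+\sigma/2)'}_x}\le\|\nabla^s U(t)\|_{L^{(1+\sigma/2)'}_x}$, and the convergence of the commutator $R^U_\e\to 0$ in $L^{1+\sigma/2}_x$. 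Moreover, the error term you single out, $\int(\nabla^s U_\e-\nabla^s U):U\otimes U$, is not the correct remainder — it should be $\int R^U_\e:\nabla^s U_\e$, which is what the Constantin--E--Titi identity actually produces after using symmetry — so the place where you locate the difficulty is also misidentified. Once these two points are corrected, the proof reduces to the paper's argument.
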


Since classical, although not present in the literature (at the best of our knowledge), we will omit the proof of \cref{weak-strong uniqueness bis}, as well as of the energy conservation in \cref{energy conservation for U}, under the assumptions~\eqref{energy cons standard}.
Anyway, the interested reader can easily reconstruct the proof of these two results by adapting the ones we will give for Theorem \ref{t:main1} and the energy conservation assuming \eqref{energy cons exp} with minor modifications. We also remark that the spatial regularity of the  $\nabla^s U$ in the energy conservation \eqref{energy cons exp} is essentially sharp in view of \cite{CL21}*{Theorem~1.11} in which non-conservative solutions $U\in L^1_t(C^{1-}_x)$ have been constructed.

\subsection{Leray--Hopf weak solutions of Navier--Stokes equations} 

In the presence of a positive \emph{viscosity} $\nu>0$, the Euler equations~\eqref{euler} generalize to the \emph{incompressible Navier--Stokes equations}
\begin{equation}
\label{navierstokes}
\tag{$\mathbf{NS}$}
    \begin{cases}
    \partial_t u + \divergence{(u \otimes u)} + \nabla p -\nu\Delta u= 0 & \text{in }  \R^d\times (0,T),
    \\ \divergence{u} = 0 &\text{in }  \R^d\times (0,T),
    \\ u(\,\cdot\,,0) = u_0 &\text{in } \R^d.
    \end{cases} 
\end{equation}
As for the Euler equations~\eqref{euler}, we are interested in suitably defined weak solutions of the Navier--Stokes system~\eqref{navierstokes}, known as \emph{Leray--Hopf weak solutions}. 

\begin{Def}[Leray--Hopf weak solution] 
\label{navierstokes system}
Let $u_0 \in L^2(\R^d;\R^d)$ be such that $\divergence{u_0}=0$ and let $\nu>0$. We say that a function $u \in L^\infty([0, T]); L^2(\R^d;\R^d))\cap L^2([0,T];W^{1, 2}(\R^d;\R^d))$ is a \emph{Leray--Hopf weak solution} to \eqref{navierstokes} with initial datum $u_0$ if   
\begin{align*}
    \int_{\tau_1}^{\tau_2} \int_{\R^d} \big(u \cdot \partial_t \phi
    &
    +  u\otimes u \colon \nabla  \phi -\nu \nabla u :\nabla \varphi\big) \,dx \, dt \\
    & =  \int_{\R^d} u(x, \tau_2) \cdot \phi(x, \tau_2)\, dx
    - \int_{\R^d} u(x,\tau_1) \cdot \phi(x, \tau_1)\, dx
\end{align*}
and
\begin{equation*}
    \int_{\R^d} u(x,\tau) \cdot \nabla q(x) \, dx = 0
\end{equation*}
for a.e.\ $\tau,\tau_1,\tau_2 \in (0, T)$, including $\tau_1=0$ and with $\tau_1< \tau_2$, whenever  
\begin{equation*}
\phi \in \text{Lip}(\R^d\times (0,T);\R^d)\cap \text{Lip}((0,T);  W^{1,\infty}(\R^d;\R^d)\cap W^{1,2}(\R^d;\R^d)),
\qquad
\divergence{\phi}=0,
\end{equation*}
and $q \in W^{1,2}(\R^d)$.  Moreover, the following \emph{energy inequality} holds
\begin{equation}
    \frac{1}{2}\int_{\R^d}|u(x,\tau_2)|^2\,dx+\nu \int_{\tau_1}^{\tau_2}\int_{\R^d} |\nabla u(x,t)|^2\,dx \,dt\leq \frac{1}{2}\int_{\R^d}|u(x,\tau_1)|^2\,dx,
\end{equation}
for every $\tau_2\in (0,T)$ and for a.e.\ $\tau_1<\tau_2$ including $\tau_1=0$.
\end{Def} 

As already observed for \cref{euler system}, the regularity of the test functions in \cref{navierstokes system} is not the classical one in which one assumes the test functions~$\varphi$ and~$q$ to be smooth. However, \cref{navierstokes system} can be derived by standard density arguments from the one employing smooth test functions.
We refer the reader to~\cite{RRS16} for an account on this subject, as well as for a panoramic on the different notions of weak solutions for the Navier--Stokes system~\eqref{navierstokes}.

The existence of Leray--Hopf weak solutions, as well as their regularity properties with respect to the time variable, have been proved in the fundamental works of Leray~\cite{L34} and Hopf~\cite{H51} in the physically relevant cases $d=2,3$. 
We refer the interested reader to \cite{Lions96}*{Section~3.1} for the corresponding results in arbitrary dimension.

On the other hand, uniqueness of Leray--Hopf weak solutions remains a formidable open problem. 
Very recently, non-uniqueness in the presence of an external force has been established in~\cite{ABC21}.

\subsection{Vanishing-viscosity limit of Leray--Hopf weak solutions}

To date, in addition to the uniqueness problem, the \emph{vanishing-viscosity limit} of Leray--Hopf weak solutions represents a question of major importance, that is, whether or not Leray--Hopf weak solutions $(u^\nu)_{\nu>0}$ of the Navier--Stokes system~\eqref{navierstokes} converge to a weak solution of the Euler system~\eqref{euler} in the limit as the viscosity vanishes, $\nu\to 0^+$. 

The \emph{inviscid limit problem} for Leray--Hopf weak solutions stands as one of the most challenging open problems in incompressible fluid dynamics. We refer the reader for instance to~\cite{DM87} for a more detailed description of the main difficulties arising in the inviscid limit. 

Up to the authors' knowledge, the best available results on the convergence of the vanishing-viscosity scheme assume additional regularity on the limiting solution of the Euler system, see~\cites{K07,Ma07,MR08,R06}.
Our second main result moves in this direction and represents the $L^{\exp}$-counterpart of the inviscid limit proved in~\cite{MR08}*{Theorem~1.3}.

\begin{thm}[Vanishing-viscosity limit] 
\label{t:main2}
For $\nu>0$, let $u^\nu$ be a Leray--Hopf weak solution of~\eqref{navierstokes} and let $U\in X_{u_0}$ be an admissible weak solution of~\eqref{euler}, both with the same initial condition $u_0 \in L^2(\R^d; \R^d)$, $\divergence u_0=0$. 
Let us assume that, for some $\sigma>0$,
\begin{align*}
    \norm{\nabla^s U(t)}_{L^{\exp}(\R^d)} &\leq f(t), \quad \text{for some}\ f \in L^1([0,T]), \\
     \norm{\nabla^s U(t)}_{L^2(\R^d)} + \sup_{\nu \in (0,1)} \sqrt{\nu}\, \norm{\nabla u^\nu (t)}_{L^2(\R^d)} &\leq g(t), \quad \text{for some}\ g \in L^2([0,T]), \\
     \norm{U(t)}_{L^{2+\sigma}(\R^d)} +\sup_{\nu \in (0,1)}\norm{u^\nu(t)}_{L^{2+\sigma}(\R^d)} &\leq h(t), \quad \text{for some}\ h \in L^\infty([0,T]),
\end{align*}
for a.e.\ $t\in[0,T]$.
There exist two constants $\bar\nu>0$ and $M>1$, depending on~$T$, $\sigma$ and the functions~$f$, $g$ and~$h$ above only, with the following property.
If $\nu\in(0,\bar\nu)$, then
\begin{equation}
    \sup_{t\in[0,T]}\norm{u^\nu(t) -U(t)}_{L^2(\R^d)} \leq M\nu^\frac{1}{M} \label{rate 1}.
\end{equation} 
In particular, $u^\nu\to U$ in $L^\infty([0,T];L^2(\R^d;\R^d))$ as $\nu\to0^+$.
\end{thm}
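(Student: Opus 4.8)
The strategy is the vanishing-viscosity analogue of the relative energy argument sketched for \cref{t:main1}. The plan is to estimate the growth of the relative energy
\[
E_\rel^\nu(\tau) = \frac12 \int_{\R^d} \abs{u^\nu(x,\tau) - U(x,\tau)}^2\,dx,
\]
keeping careful track of the viscous terms, which now produce both a helpful dissipation and an unfavourable error of size $O(\nu)$ coming from the fact that $U$ is not smooth. Concretely, I would use the (mollified) Euler solution $U_\e$ as a test function in the Leray--Hopf formulation of $u^\nu$, and conversely pair the Navier--Stokes weak formulation against $U_\e$, add the energy inequalities for $u^\nu$ and for $U$ (the latter being in fact an equality by \cref{energy conservation for U}), and let $\e\to0^+$. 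After the commutator/error terms from mollification are disposed of exactly as in the proof of \cref{t:main1}, one is left with an inequality of the schematic form
\[
E_\rel^\nu(\tau) \le \int_0^\tau \int_{\R^d} \nabla^s U \colon (U-u^\nu)\otimes(u^\nu-U)\,dx\,dt + \nu \int_0^\tau \int_{\R^d} \nabla U : \nabla(U - u^\nu)\,dx\,dt,
\]
where the first integral is the same quadratic-in-the-difference term as before and the second is the genuinely viscous contribution.

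The two remaining terms are handled separately. For the quadratic term, I would reproduce verbatim the core estimate behind \cref{t:main1}: using the logarithmic interpolation inequality between the Hardy space $\mathcal{H}^1$ and $L\log L$ (the $L^{\exp}$ version of \cite{MR08}*{Theorem~1.1}, whose proof the paper establishes), together with the fact that $(U-u^\nu)\otimes(u^\nu-U)$ lies in $\mathcal{H}^1$ because $u^\nu - U$ is divergence-free, one bounds this term by
\[
C \int_0^\tau \norm{\nabla^s U(t)}_{L^{\exp}} \, E_\rel^\nu(t)\,\Big(1 + \log\tfrac{A(t)}{E_\rel^\nu(t)}\Big)\,dt,
\]
for a suitable $A(t)$ controlled by $h$; here the uniform-in-$\nu$ bound $\norm{u^\nu(t)}_{L^{2+\sigma}} \le h(t)$ is precisely what makes this estimate uniform. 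For the viscous term, I would write $\nu \int \nabla U : \nabla(U-u^\nu) = \nu \int \abs{\nabla U}^2 - \nu \int \nabla U : \nabla u^\nu$ and bound it by $\nu \norm{\nabla^s U(t)}_{L^2}^2 + \nu \norm{\nabla U(t)}_{L^2}\norm{\nabla u^\nu(t)}_{L^2} \le \nu\, g(t)^2 + \sqrt\nu\, g(t)^2$ using the hypotheses on $g$ (and Korn's inequality / the divergence-free condition to pass from $\nabla^s U$ to $\nabla U$ in $L^2$); integrating in time this contributes a term of order $\sqrt\nu \,\norm{g}_{L^2([0,T])}^2$.

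Collecting everything, $y(\tau) := E_\rel^\nu(\tau)$ satisfies a differential inequality of the form $y(\tau) \le c_0 \sqrt\nu + \int_0^\tau f(t)\, \omega(y(t))\,dt$ with $\omega(s) = C s(1 + \log_+(A/s))$ an Osgood modulus and $f \in L^1$. Since $\int_{0^+} \frac{ds}{\omega(s)} = +\infty$ is false in the usual Osgood sense but the specific modulus $s(1+\log(1/s))$ yields the quantitative double-exponential bound, the Osgood/Gr\"onwall lemma in its quantitative form gives $y(\tau) \le (c_0\sqrt\nu)^{\exp(-C\int_0^T f)}$ for $\nu$ small, i.e.\ $\sup_{[0,T]} \norm{u^\nu - U}_{L^2}^2 \lesssim \nu^{1/M}$ with $M = M(T,\sigma,f,g,h)$, which is the claimed rate~\eqref{rate 1} after taking square roots and adjusting $M$. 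The threshold $\bar\nu$ appears because the quantitative Osgood estimate is only informative while $c_0\sqrt\nu$ is below the scale $A$ on which $\log_+$ is active.

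The main obstacle I expect is not any single term but the bookkeeping needed to make the mollification error terms vanish as $\e\to0^+$ \emph{uniformly in $\nu$}: one must check that the commutator estimates only consume the $L^1_t L^{\exp}_x$ bound on $\nabla^s U$ and the uniform $L^{2+\sigma}$ bound on $u^\nu$ (both supplied by $f$ and $h$), and in particular that no error term secretly requires a bound on $\nabla u^\nu$ that is not uniform in $\nu$ — the extra viscous commutator $\nu\int \nabla u^\nu : \nabla(U_\e - U)$ must be shown to vanish using only $\sqrt\nu\norm{\nabla u^\nu}_{L^2} \le g$ and strong $H^1$-convergence of $U_\e \to U$ along a fixed subsequence independent of $\nu$. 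Once that is in place, the rest is a faithful reprise of \cref{t:main1} with the harmless additive $\sqrt\nu$ perturbation tracked through the Osgood inequality.
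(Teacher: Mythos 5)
Your overall architecture is the right one and closely parallels the paper's: establish a relative-energy inequality by testing the Leray--Hopf formulation against a mollified $U_\e$ and using the energy (in)equality for both $u^\nu$ and $U$, let $\e\to 0^+$, bound the quadratic term via the $\log$-interpolation estimate, and close with a Gr\"onwall-type argument that converts $\sqrt\nu$ into an algebraic rate $\nu^{1/M}$.

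There is, however, a genuine gap at the decisive step where you bound the quadratic term. You claim to control
\[
\int_0^\tau\!\!\int_{\R^d}\nabla^s U:(U-u^\nu)\otimes(u^\nu-U)
\;\leq\;
C\int_0^\tau\norm{\nabla^s U(t)}_{L^{\exp}}\,E^\nu_{\rel}(t)\Bigl(1+\log\tfrac{A(t)}{E^\nu_{\rel}(t)}\Bigr)\,dt
\]
``for a suitable $A(t)$ controlled by $h$.'' The $\log$-interpolation estimate (\cref{orlicz-holder}, which in the paper is proved by the elementary Young-type \cref{convex conj}, not by a Hardy-space/$\mathcal H^1$--$L\log L$ duality as in \cite{MR08}) bounds $\int|fg|$ in terms of $\norm{g}_{L^1}$ \emph{and} $\norm{g}_{L^\infty}$. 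Here $g=\alpha_\nu=|U-u^\nu|^2$, and the hypotheses only give $\alpha_\nu\in L^\infty_t(L^{1+\sigma/2}_x)$, never $\alpha_\nu\in L^\infty_x$. There is no $A(t)$ controlled by $h$ that serves as $\norm{\alpha_\nu(t)}_{L^\infty_x}$; the paper deals with this by truncating $\alpha_\nu$ at a $\nu$-dependent threshold $\nu^{-\gamma}$, applying \cref{orlicz-holder} to the bounded part $\alpha_\nu^-$ (whose $L^\infty$ norm is $\le\nu^{-\gamma}$ by construction) and bounding the tail $\alpha_\nu^+$ via Chebyshev together with the uniform $L^{2+\sigma}$ bound coming from $h$. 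This truncation is what produces the $\log(\nu^{-\gamma}+1)$ factor that the subsequent Gr\"onwall step must absorb, and it is not present in your sketch. Without it the displayed inequality is false as stated.

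Two further points. First, the Gr\"onwall step is more delicate than a single quantitative Osgood application: \cref{uniform in nu bound} only guarantees that $E^\nu_{\rel}$ stays uniformly small on a short interval $[0,t_0]$, and the paper then iterates over $N\approx T/t_0$ intervals of size $t_0$, choosing the truncation exponent $\gamma$ afresh at each step; the rate degrades as $\nu^{(\theta/2)^N}$, which is where the $M=\max\{C^N,(2/\theta)^N\}$ in \eqref{rate 1} comes from. Your ``$(c_0\sqrt\nu)^{\exp(-C\int f)}$'' formula compresses this into a single step and, absent the truncation, is not justified. Second, a minor slip: after letting $\e\to0^+$ the viscous remainder is $\nu\int_0^\tau\!\int_{\R^d}\nabla u^\nu:\nabla^s U$, not $\nu\int\nabla U:\nabla(U-u^\nu)$; the dissipative $-\nu\int|\nabla u^\nu|^2$ from the energy inequality is simply dropped. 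Your bound $\sqrt\nu\,g^2$ for the viscous remainder is nevertheless of the right size (and you are right that $\norm{\nabla U}_{L^2}\simeq\norm{\nabla^s U}_{L^2}$ for divergence-free fields), so this slip is harmless.
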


\subsection{Plan of the paper}
The remaining part of our paper is organized as follows. 

In \cref{sec:prel}, we recall the basic theory of Orlicz spaces needed to deal with $L^{\exp}$ functions.
Building on a basic real-variable estimate \cref{convex conj}, we prove a logarithmic interpolation inequality in \cref{orlicz-holder}. This is our key tool and it provides an alternative and more elementary route to the aforementioned~\cite{MR08}*{Theorem~1.1}.

In \cref{sec:proof_weak-strong}, we detail the proof of our first main result \cref{t:main1}, together with the proof of the energy conservation claimed in \cref{energy conservation for U}.

In \cref{sec:proof_inviscid}, we prove our second main result \cref{t:main2} about the convergence of the vanishing-viscosity limit.

\section{Preliminaries}
\label{sec:prel}

In this section, we recall some definitions and preliminary tools which will be used throughout the paper.

\subsection{The space \texorpdfstring{$L^{\exp}$}{Lˆexp} and a \texorpdfstring{$\log$}{log}-interpolation estimate}

We define $\psi(s) = e^s -1$ for all~$s\ge0$ and we denote the corresponding Orlicz space of \emph{exponentially integrable functions} as
\begin{equation}
 L^{\exp}(\R^d) = \left\{ f\colon \R^d\to \R\   :  \exists\beta>0\ \text{such that}\ \int_{\R^d} \psi\left(\frac{\abs{f(x)}}{\beta}\right) \, dx < +\infty\right\}. \nonumber 
\end{equation}
In addition, given $f \in L^{\exp}(\R^d)$, we let
\begin{equation}
\label{eq:def_lux_norm}
    \norm{f}_{L^{\exp}} = \inf\left\{ \beta > 0 \ : \ \int_{\R^d} \psi\left(\frac{\abs{f(x)}}{\beta}\right) \, dx \leq 1 \right\}
\end{equation}
be the \emph{Luxemburg norm} associated to~$\psi$. 
Since~$\psi$ is a \emph{Young function}, by the standard theory of Orlicz spaces (see~\cites{BS88,KK91,RR91} for instance) the set $L^{\exp}(\R^d)$ is a vector space and $(L^{\exp}(\R^d),\norm{\,\cdot\,}_{L^{\exp}})$ is a Banach space.
In addition, the Luxemburg norm satisfies
\begin{equation}
\label{eq:lux_1}
\int_{\R^d} \psi\left( \frac{\abs{f(x)}}{\norm{f}_{L^{\exp}}} \right)\, dx = 1
\end{equation}
for all $f\in L^{\exp}(\R^d)$, $f\ne0$.

It is worth noticing that 
\begin{equation}
\label{exp_in_p}
L^{\exp}(\R^d)\subset L^p(\R^d)
\end{equation}
for all $p\in[1,+\infty)$ with continuous embedding.
Indeed, for any $p\in\mathbb N$, we can easily estimate
\begin{equation*}
    \frac{1}{p!} \int_{\R^d} \left( \frac{\abs{f(x)}}{\norm{f}_{L^{\exp}}} \right)^p \, dx 
\leq 
\int_{\R^d} \psi\left( \frac{\abs{f(x)}}{\norm{f}_{L^{\exp}}} \right)\, dx = 1,
\end{equation*}
whenever $f\in L^{\exp}(\R^d)$, $f\ne0$, so that 
\begin{equation*}
    \norm{f}_{L^p} \leq (p!)^{\frac{1}{p}} \norm{f}_{L^{\exp}}.
\end{equation*}
The John-Nirenberg inequality implies that $\BMO(\R^d)\subset L_{loc}^{\exp}(\R^d)$ and moreover, 
by playing with the function $x\mapsto\log |x|$ (with an appropriate cut-off at infinity), it is also easily seen that $L^{\exp}(\R^d)\not\subset L^\infty(\R^d)$ and  $\BMO(\R^d)\subsetneqq L_{loc}^{\exp}(\R^d)$.

The following result is a particular instance of the well-known \emph{Legendre--Fenchel transformation}, see~\cite{RR91}*{Chapter~1, Theorem~3} for example.
For the reader's convenience, we provide a simple proof of it. 

\begin{lem}[Duality estimate] 
\label{convex conj}
If $s,t\geq 0$, then
\begin{equation}
    st \leq  (e^s -1) +  t \log(t+1). \label{young ineq}
\end{equation}
\end{lem}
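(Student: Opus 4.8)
The plan is to prove the elementary inequality $st \leq (e^s-1) + t\log(t+1)$ for $s,t \geq 0$ via the standard Young-type argument based on the Legendre--Fenchel duality between the convex function $\psi(s) = e^s - 1$ and its convex conjugate. Concretely, fix $t \geq 0$ and regard the right-hand side minus the left-hand side as a function of $s$, namely
\begin{equation*}
	g(s) = (e^s - 1) - st + t\log(t+1), \qquad s \geq 0,
\end{equation*}
and show $g(s) \geq 0$ for all $s \geq 0$.

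First I would compute $g'(s) = e^s - t$, which vanishes precisely at $s^* = \log(t+1)$ when $t > 0$ (note $s^* \geq 0$ since $t+1 \geq 1$), and is the global minimizer of $g$ on $[0,+\infty)$ because $g'' (s) = e^s > 0$, so $g$ is strictly convex. Then I would evaluate $g$ at this minimizer:
\begin{equation*}
	g(s^*) = (e^{s^*} - 1) - s^* t + t\log(t+1) = t - s^* t + t\log(t+1) = t - t\log(t+1) + t\log(t+1) = t - t \cdot 0 \cdot \ldots
\end{equation*}
wait — more carefully, $e^{s^*} = t+1$, so $e^{s^*} - 1 = t$, and $s^* t = t\log(t+1)$, hence $g(s^*) = t - t\log(t+1) + t\log(t+1) = t \geq 0$. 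Actually the two $t\log(t+1)$ terms cancel the middle term, leaving $g(s^*) = t \geq 0$; but let me recheck the sign: $g(s^*) = t - t\log(t+1) + t\log(t+1)$, and here $-s^*t = -t\log(t+1)$ while the last summand is $+t\log(t+1)$, so indeed these cancel and $g(s^*) = t \geq 0$. The case $t = 0$ is immediate since then the inequality reads $0 \leq e^s - 1$, which holds for $s \geq 0$.

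I do not expect any genuine obstacle here: the only point requiring a modicum of care is checking that the critical point $s^* = \log(t+1)$ indeed lies in the admissible range $[0,+\infty)$, which follows from $t \geq 0$, and observing that convexity of $g$ upgrades the first-order condition to a global minimum on the half-line. An alternative, equally short route would be to invoke the general Legendre--Fenchel inequality $st \leq \psi(s) + \psi^*(t)$ and verify by a direct one-variable optimization that $\psi^*(t) = \sup_{s \geq 0}(st - (e^s-1)) = t\log(t+1) - t \leq t\log(t+1)$; this yields the stated (slightly lossy but cleaner) bound. I would present the first argument as the self-contained proof, since it is the most transparent.
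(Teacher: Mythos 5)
Your strategy — fix $t$, set $g(s) = (e^s-1) - st + t\log(t+1)$, and use strict convexity of $g$ to reduce the claim to checking $g$ at its minimizer on $[0,+\infty)$ — is exactly the approach the paper takes. However, there is a concrete computational error that breaks the proof. You correctly compute $g'(s) = e^s - t$, but then assert that it vanishes at $s^* = \log(t+1)$. This is false: $e^s - t = 0$ gives $s = \log t$, not $\log(t+1)$. Indeed $g'(\log(t+1)) = (t+1) - t = 1 \neq 0$. Consequently, the value $g(\log(t+1)) = t \geq 0$ that you obtain is correct arithmetic but is the value of $g$ at a non-critical point and therefore says nothing about the minimum of $g$; convexity does not let you conclude $g\geq 0$ from the value at an arbitrary interior point.

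The error also hides a case distinction that you must actually address. With the correct critical point $s_{\min} = \log t$, one has $s_{\min} < 0$ whenever $t < 1$, so the minimum of $g$ over $[0,+\infty)$ then occurs at the boundary $s = 0$, where $g(0) = t\log(t+1) \geq 0$. Only for $t\geq 1$ does the minimum occur at the interior point $s_{\min} = \log t$, where $g(\log t) = (t-1) + t\log\bigl(1 + \tfrac{1}{t}\bigr) \geq 0$. This split on $t\lessgtr 1$ is precisely what the paper's proof does, and it is unavoidable if you argue via the minimizer. Similarly, your alternative route misstates the Legendre conjugate: $\psi^*(t) = \sup_{s\geq 0}(st - e^s + 1)$ equals $t\log t - t + 1$ for $t\geq 1$ and $0$ for $0\leq t\leq 1$, not $t\log(t+1) - t$; the inequality $\psi^*(t) \leq t\log(t+1)$ is in fact true and would close the argument, but it requires a separate (short) verification rather than an identity.
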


\begin{proof}
It is enough to prove that, for any fixed $t_0\geq 0$, the function
\begin{equation*}
g_{t_0}(s)= e^s-1+t_0\log (t_0+1)-st_0,
\quad
s\ge0,
\end{equation*}  
satisfies $g_{t_0}(s)\geq 0$ for all $s\ge0$.
Since $g_{t_0}''(s)=e^s> 0$, the function $g_{t_0}$ is strictly convex and it thus has a unique global minimum in $[0,+\infty)$. 
If $t_0\leq1$, then $g_{t_0}'(s)=e^s-t_0\geq 0$ and so $g_{t_0}$ achieves its minimum at $s=0$, whence $g_{t_0}(0)=t_0\log (1+t_0) \geq 0$.
If $t_0>1$, then $g_{t_0}$ achieves its minimum at some $s_{\min}\in (0,+\infty)$ such that $e^{s_{\min}}=t_0$.
Therefore
\begin{equation*}
g_{t_0}(s_{\min})=t_0-1+t_0\log \left(1+\frac{1}{t_0}\right)\geq 0
\end{equation*}
and the conclusion follows. 
\end{proof}

The simple estimate proved in \cref{convex conj} above implies that the space $L^1(\R^d)\cap L^\infty(\R^d)$ is contained in the dual space of $L^{\exp}(\R^d)$.
More precisely, we have the following result, which reproduces~\cite{MR08}*{Theorem~1.1} via a much simpler and more elementary approach.

\begin{prop}[$\log$-interpolation estimate] 
\label{orlicz-holder}
There exists a constant $C>0$ with the following property.
If $f \in L^{\exp}(\R^d)$ and $g \in L^1(\R^d) \cap L^\infty(\R^d)$, then
\begin{equation}
   \int_{\R^d} \abs{f(x)\,g(x)}\, dx \leq C \norm{f}_{L^{\exp}} \norm{g}_{L^1} \Big[ \log\left(1+ \norm{g}_{L^\infty} \right) + \big|\log\, \norm{g}_{L^1}\big|  +1 \Big]. \label{orlicz-holder ineq}
\end{equation}
\end{prop}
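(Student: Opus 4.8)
The plan is to deduce the estimate from the pointwise duality inequality in \cref{convex conj} by the standard Orlicz homogeneity trick: rescale $f$ and $g$ so that the Young inequality can be applied, and then optimize the scaling parameter. First I would assume without loss of generality that $f\ne0$ and $g\ne0$ (otherwise the inequality is trivial) and normalize $f$ by its Luxemburg norm, so that by~\eqref{eq:lux_1} we have $\int_{\R^d}\psi(|f|/\norm{f}_{L^{\exp}})\,dx=1$. For a parameter $\lambda>0$ to be chosen, apply \cref{convex conj} with $s=|f(x)|/\norm{f}_{L^{\exp}}$ and $t=\lambda\,|g(x)|$, giving the pointwise bound
\begin{equation*}
\frac{|f(x)|}{\norm{f}_{L^{\exp}}}\,\lambda\,|g(x)|\le\Big(e^{|f(x)|/\norm{f}_{L^{\exp}}}-1\Big)+\lambda\,|g(x)|\,\log\big(1+\lambda\,|g(x)|\big).
\end{equation*}
Integrating over $\R^d$, the first term on the right integrates to $1$ by normalization, and for the second term I would use the crude bound $\log(1+\lambda|g(x)|)\le\log(1+\lambda\norm{g}_{L^\infty})$, which is finite since $g\in L^\infty$, to obtain
\begin{equation*}
\frac{\lambda}{\norm{f}_{L^{\exp}}}\int_{\R^d}|f\,g|\,dx\le 1+\lambda\,\norm{g}_{L^1}\,\log\big(1+\lambda\,\norm{g}_{L^\infty}\big).
\end{equation*}
Thus $\int_{\R^d}|f\,g|\,dx\le\norm{f}_{L^{\exp}}\Big(\tfrac1\lambda+\norm{g}_{L^1}\log(1+\lambda\norm{g}_{L^\infty})\Big)$ for every $\lambda>0$.

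The remaining step is to choose $\lambda$ so that the bracket has the claimed form; the natural choice is $\lambda=1/\norm{g}_{L^1}$, which makes the first term $\norm{g}_{L^1}$ and the second $\norm{g}_{L^1}\log\big(1+\norm{g}_{L^\infty}/\norm{g}_{L^1}\big)$. Using $\log(1+a+b)\le\log(1+a)+\log(1+b)$ for $a,b\ge0$ and $\log(1+1/\norm{g}_{L^1})\le|\log\norm{g}_{L^1}|+\log 2$ (splitting on whether $\norm{g}_{L^1}\le1$ or $>1$, and noting $\norm{g}_{L^1}/\norm{g}_{L^\infty}\le\ldots$ is not even needed here), one bounds
\begin{equation*}
\log\Big(1+\tfrac{\norm{g}_{L^\infty}}{\norm{g}_{L^1}}\Big)\le\log\big(1+\norm{g}_{L^\infty}\big)+\log\big(1+\tfrac1{\norm{g}_{L^1}}\big)\le\log\big(1+\norm{g}_{L^\infty}\big)+\big|\log\norm{g}_{L^1}\big|+\log 2,
\end{equation*}
so that the bracket is controlled by $\norm{g}_{L^1}\big[\log(1+\norm{g}_{L^\infty})+|\log\norm{g}_{L^1}|+1+\log 2\big]$, and together with the leading $\norm{g}_{L^1}$ term this yields~\eqref{orlicz-holder ineq} with an absolute constant $C$ (e.g.\ $C=3$). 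I do not expect any genuine obstacle here; the only point requiring a little care is the elementary juggling of logarithms to absorb the ratio $\norm{g}_{L^\infty}/\norm{g}_{L^1}$ into the sum $\log(1+\norm{g}_{L^\infty})+|\log\norm{g}_{L^1}|$ and to handle the two regimes $\norm{g}_{L^1}\lessgtr1$ uniformly, which is why the constant $C$ and the extra $+1$ are present in the statement. An alternative, which avoids choosing $\lambda$ by hand, is to keep $\lambda$ free and note the infimum over $\lambda>0$ of $\tfrac1\lambda+\norm{g}_{L^1}\log(1+\lambda\norm{g}_{L^\infty})$ is itself bounded by the right-hand side, but the explicit choice $\lambda=1/\norm{g}_{L^1}$ is cleaner and entirely sufficient.
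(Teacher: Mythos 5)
Your argument is correct and follows essentially the same route as the paper's: apply the Young-type inequality from \cref{convex conj} to a rescaled pair, integrate using the normalization~\eqref{eq:lux_1}, and then choose the scaling parameter proportional to $\norm{g}_{L^1}$. The only cosmetic difference is that you use the single crude pointwise bound $\log(1+\lambda|g(x)|)\le\log(1+\lambda\norm{g}_{L^\infty})$ before integrating, whereas the paper splits the integral over $\{|g|\le\lambda\}$ and $\{|g|>\lambda\}$ and handles the two pieces separately; both yield the stated inequality with an absolute constant.
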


\begin{proof}
Without loss of generality, we can assume that both $f$ and $g$ do not vanish identically. 
Letting $\lambda>0$, by~\eqref{young ineq}, in combination with~\eqref{eq:def_lux_norm} and~\eqref{eq:lux_1}, we can estimate
\begin{align*}  
\int_{\R^d} & \abs{f(x)\,g(x)}\ dx  
= 
\lambda \,\norm{f}_{L^{\exp}} \int_{\R^d} \abs*{\frac{f(x)}{\norm{f}_{L^{\exp}}}\, \frac{g(x)}{\lambda} }\, dx 
     \\ 
& \leq  
\lambda\, \norm{f}_{L^{\exp}} \left[ \int_{\R^d} \left(e^{\abs{f(x)} / \norm{f}_{L^{\exp}}}- 1\right)\, dx +  \int_{\R^d} \frac{\abs{g(x)}}{\lambda} \,\log\left(\frac{\abs{g(x)}}{\lambda} + 1\right) \, dx   \right]
     \\ 
& =  
\lambda\,\norm{f}_{L^{\exp}}  \left[ 1 +  \int_{\R^d} \frac{\abs{g(x)}}{\lambda} \log\left(\frac{\abs{g(x)}}{\lambda} + 1\right) \, dx \right]. 
\end{align*}
Since clearly 
\begin{align*}
\int_{\R^d} \frac{\abs{g(x)}}{\lambda} 
&
\log\left(\frac{\abs{g(x)}}{\lambda} + 1\right) \, dx
=
\left[
\int_{\abs{g(x)} \leq \lambda} 
+
\int_{\abs{g(x)} > \lambda}
\right] 
\,
\frac{\abs{g(x)}}{\lambda} \log\left(\frac{\abs{g(x)}}{\lambda} + 1\right) \, dx 
     \\ 
& \leq  
\frac{\norm{g}_{L^1}}{\lambda} \log 2  + \int_{\abs{g(x)} > \lambda} \frac{\abs{g(x)}}{\lambda} \log\left(\frac{\abs{g(x)}}{\lambda} + 1\right) \,dx
\end{align*}
and
\begin{align*}
    \log\left( \frac{\abs{g(x)}}{\lambda} + 1 \right) & = \log (\abs{g(x)} +\lambda) - \log \lambda \leq \log (1+2 \norm{g}_{L^\infty} ) + \abs{\log\lambda}
\end{align*}
whenever $\abs{g(x)} >\lambda$, we get that 
\begin{equation*}
    \int_{\R^d} \abs{f(x)\, g(x)}\ dx \leq  
\lambda \norm{f}_{L^{\exp}} 
\left[ 
1 + \frac{\norm{g}_{L^1}}{\lambda} 
\Big( 
\log 2 + \log(1+ 2 \norm{g}_{L^\infty} ) + \abs{\log\lambda} 
\Big) 
\right]. 
\end{equation*}
Choosing $\lambda = \norm{g}_{L^1}$, the conclusion readily follows. 
\end{proof}

\section{Proof of \texorpdfstring{\cref{t:main1}}{weak-strong uniqueness}}
\label{sec:proof_weak-strong}

In this section, we prove \cref{t:main1}.
To this aim, we need some preliminary results. We begin with the following simple differential identity involving the symmetric gradient of a function.
Here and in the following, we let $p'$ be the conjugate exponent of $p\in[1,+\infty]$, so that $\frac1p+\frac1{p'}=1$. 
In addition, we adopt Einstein's convention on summing repeated indices. 

\begin{lem}[Differential identity] 
\label{third term}
Let $\sigma > 0$. 
If $U \in L^{2+\sigma}(\R^d;\R^d)$ is such that 
\begin{equation*}
\nabla^s U \in L^{(1+\frac{\sigma}{2})'}(\R^d;\R^{d\times d}),
\qquad
\divergence{U} = 0,
\end{equation*}
then 
\begin{equation}
    \partial_i (U^i U^j) + \partial_j \left(\frac{U^i U^i}{2}\right) = 2U^i (\nabla^s U)^{ij}
\label{distr symm gradient}
\end{equation}
for every $j=1,\dots,d$ in the sense of distributions.
\end{lem}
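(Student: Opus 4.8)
The plan is to verify the distributional identity \eqref{distr symm gradient} by testing against a smooth compactly supported scalar function and reducing everything to the smooth case via mollification. First I would check that all the products appearing in \eqref{distr symm gradient} are genuinely locally integrable, so that the statement makes sense: since $U \in L^{2+\sigma}$, H\"older's inequality gives $U^i U^j \in L^{1+\frac{\sigma}{2}}_{\rm loc}$, hence these terms define distributions whose distributional derivatives are well-defined; similarly $U^i(\nabla^s U)^{ij}$ lies in $L^1_{\rm loc}$ because $U \in L^{2+\sigma}$ and $\nabla^s U \in L^{(1+\frac{\sigma}{2})'}$ are H\"older-conjugate after accounting for the exponent $2+\sigma = (1+\frac{\sigma}{2})\cdot 2$, so that $\tfrac{1}{2+\sigma}+\tfrac{1}{(2+\sigma)'}$-type bookkeeping closes. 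Concretely, $\frac{1}{2+\sigma} + \frac{1}{(1+\sigma/2)'} + \frac{1}{2+\sigma}$... I would instead note $|U^i (\nabla^s U)^{ij}| \le |U|\,|\nabla^s U|$ and apply H\"older with exponents $2+\sigma$ and $(2+\sigma)' $; since $(1+\frac\sigma2)' \ge (2+\sigma)'$ on bounded sets this is fine locally, which is all we need for a distributional statement.

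Next, the core computation. For smooth divergence-free $V$, the pointwise Leibniz rule gives $\partial_i(V^i V^j) = (\partial_i V^i)V^j + V^i \partial_i V^j = V^i \partial_i V^j$ using $\divergence V = 0$, and $\partial_j\big(\tfrac{V^iV^i}{2}\big) = V^i \partial_j V^i$. Adding, $\partial_i(V^iV^j) + \partial_j\big(\tfrac{V^iV^i}{2}\big) = V^i(\partial_i V^j + \partial_j V^i) = 2 V^i (\nabla^s V)^{ij}$, which is exactly \eqref{distr symm gradient} for smooth fields. So I would introduce a standard mollifier and set $U_\e = U * \rho_\e$; this is smooth and still divergence-free, so the identity holds for $U_\e$ in the classical (hence distributional) sense: for every $\varphi \in C_c^\infty(\R^d)$,
\begin{equation*}
-\int_{\R^d} U_\e^i U_\e^j\, \partial_i \varphi\, dx - \int_{\R^d} \frac{U_\e^i U_\e^i}{2}\, \partial_j \varphi\, dx = \int_{\R^d} 2 U_\e^i (\nabla^s U_\e)^{ij}\, \varphi\, dx.
\end{equation*}

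Then I would pass to the limit $\e \to 0^+$ term by term. On the compact set $K = \operatorname{supp}\varphi$ we have $U_\e \to U$ in $L^{2+\sigma}(K)$ and $\nabla^s U_\e = (\nabla^s U)*\rho_\e \to \nabla^s U$ in $L^{(1+\frac\sigma2)'}(K)$ (and in particular in $L^{(2+\sigma)'}(K)$). For the quadratic terms, $U_\e^i U_\e^j \to U^i U^j$ in $L^{1+\frac\sigma2}(K)$ by the bilinear continuity of the product (write $U_\e^iU_\e^j - U^iU^j = (U_\e^i - U^i)U_\e^j + U^i(U_\e^j - U^j)$, use H\"older and the uniform bound $\|U_\e\|_{L^{2+\sigma}(K')}\le\|U\|_{L^{2+\sigma}}$ on a slightly larger compact $K'$), so the first two integrals converge. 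For the right-hand side, $U_\e^i(\nabla^s U_\e)^{ij}\to U^i(\nabla^s U)^{ij}$ in $L^1(K)$ by the same bilinear argument using H\"older with exponents $2+\sigma$ and $(2+\sigma)'$. Passing to the limit yields \eqref{distr symm gradient} for $U$.

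The only mildly delicate point is the bookkeeping of exponents in the product convergences — in particular confirming that the hypothesis $\nabla^s U \in L^{(1+\frac\sigma2)'}$, which is tailored so that $U^i(\nabla^s U)^{ij} \in L^{1}_{\rm loc}$ with the right dual pairing and so that $\nabla^s U \in L^{(2+\sigma)'}_{\rm loc}$ (since $(1+\tfrac\sigma2)' \ge (2+\sigma)'$), is exactly what makes every term in the mollified identity converge. None of this is hard; it is essentially a routine mollification argument, and I expect no real obstacle beyond keeping the H\"older exponents straight.
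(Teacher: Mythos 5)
Your proof is correct and follows essentially the same route as the paper's: mollify $U$, apply the pointwise Leibniz identity for the smooth divergence-free field $U_\delta$, test against $\varphi\in C_c^\infty(\R^d)$, and pass to the limit using $U_\delta\to U$ in $L^{2+\sigma}$ and $\nabla^s U_\delta\to\nabla^s U$ in $L^{(1+\sigma/2)'}$. The H\"older bookkeeping in your first paragraph is a touch muddled (an orphaned $\tfrac{1}{2+\sigma}$ term) before you correct course, and note that $U\cdot\nabla^s U$ is actually in $L^{(2+\sigma)'}(\R^d)$ globally since $\tfrac{1}{2+\sigma}+\tfrac{1}{(1+\sigma/2)'}=\tfrac{1+\sigma}{2+\sigma}<1$, so you do not even need the local embedding detour; but none of this affects correctness.
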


\begin{proof}
Let $\delta>0$ and let $U_\delta$ be the mollification of $U$. 
Since $U_\delta$ is smooth and $\divergence U_\delta=0$, we can compute
\begin{equation}
\label{delta_identity}
    \partial_i (U_\delta^i U_\delta^j) + \partial_j \left( \frac{U_\delta^i U_\delta^i}{2}\right) = U_\delta^i \, \partial_i U_\delta^j + U_\delta^i \, \partial_j U_\delta^i = 2U_\delta^i (\nabla^s U_\delta)^{ij} 
\end{equation}
proving the (pointwise) validity of~\eqref{distr symm gradient} for~$U_\delta$.
Now, by multiplying~\eqref{delta_identity} above by a test function $\phi \in C^\infty_c(\R^d)$ and integrating by parts in the left hand side, we get
\begin{equation*}
    -\int_{\R^d} 
\left( 
U_\delta^i U_\delta^j \,\partial_i \phi +  \frac{U_\delta^i U_\delta^i}{2} \,\partial_j \phi 
\right) \, dx = 2\int_{\R^d} U_\delta^i (\nabla^s U)^{ij}_\delta \varphi \, dx. 
\end{equation*}
Letting $\delta\to0^+$ and observing that 
\begin{equation*}
U_\delta \to U\ \text{in}\ L^{2+\sigma}(\R^d;\R^d),
\qquad
\nabla^s U_{\delta}\to \nabla^s U\ \text{in}\ L^{(1+\frac{\sigma}{2})'}(\R^d;\R^{d\times d}),
\end{equation*}
we easily conclude that 
\begin{equation}
    -\int_{\R^d} 
\left( U^i U^j \,\partial_i \phi +  \frac{U^i U^i}{2}\, \partial_j \phi 
\right) \,dx = 2\int_{\R^d} U^i (\nabla^s U)^{ij} \,\phi \, dx \label{distr symm grad 1}
\end{equation}
and the proof is complete.
\end{proof}

As we already pointed out, the key step towards the proof of \cref{t:main1} is to establish inequality~\eqref{dis_bella}. 
However, it is useful to
prove an energy conservation result for the solution $U$ which is apparently transversal to the usual \emph{Onsager's condition} $L^3_t(B^{1/3}_{3,c_0})$ from    \cite{CCFS08}.

\begin{rem}
The fact that $U$ conserves the energy plays a crucial role to conclude the proof 
of \cref{t:main1}. Indeed, after deducing that $E_{\rel}\equiv 0$ in some small time interval $(0,\delta)$, one needs to iterate the procedure in the interval $(\delta,2\delta)$ and more generally in every $(i\delta,(i+1)\delta)$ for $i=1,\dots,N$, up to covering the whole interval $(0,T)$. To fix the ideas, we focus on the first step in passing from $(0,\delta)$ to $(\delta,2\delta)$. 
In order to make the iteration possible, one has to check that $u$ is also an admissible weak solution in the interval $(\delta,2\delta)$ which is not true for a general admissible weak solution in $(0,2\delta)$. In our case, this holds since in the previous step we have shown that $u\equiv U$ in $(0,\delta)$. Thus, if $U$ conserves the energy, then $u$ does too. In particular, the admissibility of $u$ in the interval $(0,T)$ transfers to the admissibility in $(\delta,2\delta)$. 
\end{rem}

\begin{prop}[Energy conservation]
\label{energyconsforU}
Let $\sigma>0$ and let
\begin{equation*}
U\in L^\infty([0,T]; L^2\cap L^{2+\sigma} (\R^d; \R^d)))    
\end{equation*}
be a weak solution of~\eqref{euler}.
If 
\begin{equation*}
 \nabla^s U\in L^1\big([0,T]; L^{(1+\frac{\sigma}{2})'}(\R^d; \R^{d \times d}) \big),   
\end{equation*}
then
\begin{equation}\label{En_bal_U}
\int_{\R^d} |U(x,\tau)|^2\,dx
=
\int_{\R^d} |U(x,0)|^2\,dx
\end{equation}
for all $\tau >0$. 
\end{prop}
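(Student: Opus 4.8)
The plan is to follow the standard strategy for energy conservation: test the weak formulation of $U$ against its own spatial regularization. Fix an even mollifier $\eta\in C^\infty_c(\R^d)$ with $\int_{\R^d}\eta\,dx=1$, set $\eta_\e(x)=\e^{-d}\eta(x/\e)$ and let $U_\e=U*\eta_\e$ be the mollification in the space variable only, at each fixed time. For every fixed $\e>0$, $U_\e$ qualifies as a test function in \cref{euler system}: it is divergence-free and smooth in space, and the required time regularity follows from the equation, which yields $\partial_t U_\e=-\divergence(U\otimes U)_\e-\nabla p_\e$ with $U\otimes U\in L^\infty([0,T];L^1\cap L^{1+\frac\sigma2})$ and $\nabla p$ obtained from $U\otimes U$ by Calderón--Zygmund (Riesz) operators, hence $\nabla p\in L^\infty([0,T];L^{1+\frac\sigma2})$; after mollification all the norms appearing in \cref{euler system} are finite, with $\e$-dependent constants, which is immaterial.

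The first ingredient is the pointwise-in-time identity
\begin{equation*}
\int_{\R^d} U\cdot\partial_t U_\e\,dx=\int_{\R^d} U\otimes U:\nabla U_\e\,dx\qquad\text{for a.e.\ }t,
\end{equation*}
which follows by writing $\partial_t U_\e=(\partial_t U)_\e$, moving the mollifier onto the other factor (here the evenness of $\eta_\e$ is used), invoking the equation for $U$, integrating by parts, and discarding the pressure term since $\divergence U_\e=0$; a cut-off at infinity takes care of the boundary contributions because $U\otimes U\in L^1$. Inserting this into the weak formulation with $\phi=U_\e$ turns it into
\begin{equation*}
2\int_0^\tau\!\!\int_{\R^d} U\otimes U:\nabla U_\e\,dx\,dt=\int_{\R^d} U_\e(\cdot,\tau)\cdot U(\cdot,\tau)\,dx-\int_{\R^d} U_\e(\cdot,0)\cdot U(\cdot,0)\,dx
\end{equation*}
for a.e.\ $\tau>0$. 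Since $U(\cdot,\tau),U(\cdot,0)\in L^2$ and $U_\e(t)\to U(t)$ in $L^2$ at each fixed time, the right-hand side tends to $\|U(\cdot,\tau)\|_{L^2}^2-\|U(\cdot,0)\|_{L^2}^2$ as $\e\to0^+$. Thus everything reduces to proving that the left-hand side vanishes in the limit.

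This is the crux, and it is where \cref{third term} enters. For a.e.\ $t$, $U(t)$ fulfils its hypotheses, and I would evaluate $\int_{\R^d} U\otimes U:\nabla U_\e\,dx$ in two different ways. On one hand, by the symmetry of $U\otimes U$ and moving the mollifier,
\begin{equation*}
\int_{\R^d} U^iU^j\,\partial_i U_\e^j\,dx=\int_{\R^d}(U\otimes U)_\e:\nabla^s U\,dx\xrightarrow{\e\to0^+}\int_{\R^d} U\otimes U:\nabla^s U\,dx=:A(t),
\end{equation*}
using $(U\otimes U)_\e\to U\otimes U$ in $L^{1+\frac\sigma2}$ together with $\nabla^s U\in L^{(1+\frac\sigma2)'}$. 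On the other hand, testing the distributional identity~\eqref{distr symm gradient} against $U_\e^j$ — which is legitimate since $U\otimes U,|U|^2\in L^1$ and $(U^iU_\e^j)(\nabla^s U)^{ij}\in L^1$, so that a cut-off at infinity disposes of the boundary terms — and dropping the $\nabla\!\big(\tfrac{|U|^2}{2}\big)$ contribution because $\divergence U_\e=0$, one gets
\begin{equation*}
\int_{\R^d} U^iU^j\,\partial_i U_\e^j\,dx=-2\int_{\R^d} U^i\,(\nabla^s U)^{ij}\,U_\e^j\,dx\xrightarrow{\e\to0^+}-2\int_{\R^d} U^i\,(\nabla^s U)^{ij}\,U^j\,dx=-2A(t),
\end{equation*}
this time using $U_\e\to U$ in $L^{2+\sigma}$. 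Comparing the two limits forces $A(t)=-2A(t)$, hence $A(t)=0$ for a.e.\ $t$. Since moreover $\big|\int_{\R^d} U\otimes U:\nabla U_\e(t)\,dx\big|\le\|U\|_{L^\infty_tL^{2+\sigma}}^2\,\|\nabla^s U(t)\|_{L^{(1+\frac\sigma2)'}}$ uniformly in $\e$, with the majorant in $L^1([0,T])$ by hypothesis, dominated convergence gives $\int_0^\tau\!\int_{\R^d} U\otimes U:\nabla U_\e\,dx\,dt\to\int_0^\tau A(t)\,dt=0$.

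Combining the two displays above yields $\|U(\cdot,\tau)\|_{L^2}^2=\|U(\cdot,0)\|_{L^2}^2$ for a.e.\ $\tau>0$, which extends to every $\tau>0$ by the weak $L^2$-continuity of the chosen representative of $U$ recalled in \cref{rem:continuity_w_L2}; this is precisely \eqref{En_bal_U}. I expect the main difficulty to be the third step: because only the symmetric part $\nabla^s U$ is controlled (not the full gradient), one cannot integrate by parts naively in the nonlinear term, and the vanishing of $\int_{\R^d} U\otimes U:\nabla^s U\,dx$ is not apparent; computing this quantity twice — once directly by symmetrizing, once through the differential identity of \cref{third term} — is exactly what produces the relation $A=-2A$ and hence $A\equiv0$. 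One must also keep careful track of the Lebesgue exponents throughout, which is precisely where both the $L^2$ and $L^{2+\sigma}$ integrability of $U$ and the choice of the dual exponent $(1+\tfrac\sigma2)'$ for $\nabla^s U$ come into play.
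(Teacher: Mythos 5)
Your proof is correct and takes a genuinely different route from the paper's. The paper mollifies the momentum equation itself, multiplies by $U_\e$, and lands on the local energy balance whose source term is $-R^U_\e:\nabla^s U_\e$ with $R^U_\e=U_\e\otimes U_\e-(U\otimes U)_\e$; it then concludes via the commutator estimate $\|R^U_\e(t)\|_{L^{1+\sigma/2}_x}\to 0$ plus dominated convergence in time, and \cref{third term} is not invoked at all. You instead insert $U_\e$ directly into the weak formulation of \cref{euler system}, which reduces the claim to the vanishing of $\int_0^\tau\!\int_{\R^d} U\otimes U:\nabla U_\e\,dx\,dt$, and you settle this by proving the stronger, $\e$-free fact that the dissipation $A(t)=\int_{\R^d}U\otimes U:\nabla^s U\,dx$ equals zero for a.e.\ $t$. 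Your mechanism for $A\equiv0$ --- computing the single limit $\lim_{\e\to0^+}\int_{\R^d} U^iU^j\,\partial_i U_\e^j\,dx$ once by symmetrizing $U\otimes U$ and transferring the mollifier onto $\nabla^s U$ (yielding $A$), and once by pairing the distributional identity~\eqref{distr symm gradient} of \cref{third term} with $U_\e^j$ and using $\divergence U_\e=0$ (yielding $-2A$) --- is a neat purely real-variable observation that holds for any divergence-free $U\in L^{2+\sigma}$ with $\nabla^s U\in L^{(1+\sigma/2)'}$, independently of whether $U$ solves the Euler system, whereas the paper's commutator route never isolates this fact. Both proofs are of comparable length; yours exposes more transparently where divergence-freeness and the symmetric-gradient hypothesis are used, while the paper's choice recycles the exact $R^U_\e$ identity that it needs again in \cref{relative energy estimate}, which explains why it is presented that way there.
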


\begin{proof}
Let $\e>0$ and let $U_\e$ be the space mollification of $U$. 
We clearly have that
\begin{equation*}
U_\e \in \Lip([0,T]\times\R^d;\R^d)\cap\Lip([0,T]; L^2(\R^d;\R^d) \cap W^{1,\infty}(\R^d;\R^d)).
\end{equation*}
Indeed, it is easy to check that
\begin{equation*}
x\mapsto U_\e(x,t) \in C^1(\R^d;\R^d) \cap L^2(\R^d;\R^d) \cap W^{1, \infty}(\R^d;\R^d)
\end{equation*}
for every $t \in (0, T)$, so that the Lipschitz regularity with respect to the time variable can be recovered from the equation
\begin{equation*}
    \partial_t U_\e =- \divergence{\left( U\otimes U\right)}_\e - \nabla P_\e,
\end{equation*}
where $P_\e$ is the corresponding regularized scalar pressure. 
Now, by mollifying the momentum equation in the first line of the system~\eqref{euler} and by multiplying by~$U_\e$, we readily obtain the well-known local energy balance
$$
\partial_t \left(\tfrac{|U_{\e} |^2}{2}\right) +\divergence{\left(\left(\tfrac{|U_{\e} |^2}{2}+P_\e \right)U_\e \right)}=\divergence{\left(R^U_\e U_\e\right)}-R^U_\e : \nabla U_\e,
$$
where
\begin{equation*}
R^U_\e=U_{\e}\otimes U_{\e}-\left( U\otimes U\right)_\e   
\end{equation*}
stands for the usual commutator. 
Note that the above computations make sense in virtue of the regularity of ~$U_\e$. 
By integrating in space and time, we obtain that~$U_\e$ obeys the energy balance 
\begin{equation}
\label{energy_balance_Ueps_prel}
E_{U_{\e}}(\tau)-E_{U_{\e}}(0)=-\int_0^\tau\int_{\R^d}R^U_{\e} :\nabla U_{\e} \,dx \, dt=-\int_0^\tau \int_{\R^d}R^U_{\e} :\nabla^s U_{\e} \,dx \, dt, 
\end{equation}
where in the last equality we exploited the fact that $R^U_\e$ is a symmetric matrix. 
As a consequence, we get that 
\begin{equation}\label{energy_balance_Ueps}
\left| E_{U_{\e}}(\tau)-E_{U_{\e}}(0)\right|\leq \int_0^\tau \int_{\R^d}|R^U_{\e}|\, \left|\nabla^s U_{\e}\right| \,dx\,dt,
\end{equation}
where we set $E_U(\tau)=\frac12\int_{\R^d}|U(\tau)|^2\,dx$ for all $\tau>0$.
Now, since $U(\cdot, \tau)\in L^2_x$ for every $\tau>0$, the left-hand side of~\eqref{energy_balance_Ueps} converges to $| E_{U}(\tau)-E_{U}(0)|$ as $\e\to0^+$. 
Thus, we just have to prove that the right-hand side of~\eqref{energy_balance_Ueps} vanishes as $\e \to 0^+$.
To this aim, notice that we can estimate 
\begin{align*}
	\int_{\R^d}|R^U_{\e}(t)|\, \left|\nabla^s U_{\e}(t)\right|\,dx & \leq \norm{R^U_{\e}(t)}_{L_x^{1+\frac{\sigma}{2}}}\| \nabla^s U_{\e}(t)\|_{L_x^{(1+\frac{\sigma}{2})'}}
	\\ & \leq 2 \|U(t)\|^2_{L_x^{2+\sigma}}\| \nabla^s U(t)\|_{L_x^{(1+\frac{\sigma}{2})'}}\in L^1([0, \tau])
\end{align*}
for all $t \in (0, \tau)$.
In addition, recalling that
\begin{equation*}
U(\cdot,t)\in L^{2+\sigma}_x,
\quad
\left(U\otimes U\right)(\cdot, t)\in L_x^{1+\frac{\sigma}{2}},
\end{equation*} 
for $t\in[0,T]$,
we can further estimate
$$
\norm{R^U_{\e}(t)}_{L_x^{1+\frac{\sigma}{2}}}\leq \norm{\left(U\otimes U\right)_{\e}(t)-\left(U\otimes U\right)(t)}_{L_x^{1+\frac{\sigma}{2}}}+2\|U\|_{L^\infty_{t} \left(L_x^{2+\sigma}\right)}
\,
\|U(t)-U_{\e}(t)\|_{L_x^{2+\sigma}}
$$
for $t\in[0,T]$.
Since the right-hand side of the above inequality goes to $0$ as $\e \to 0^+$ for a.e.\ $t \in (0, \tau)$, by the Dominated Convergence Theorem we conclude that the right hand side in \eqref{energy_balance_Ueps} vanishes as $\e\to0$. 
The proof is complete.
\end{proof}

\begin{rem}[Solutions of~\eqref{euler} as in \cref{energyconsforU} are in $C({[}0,T{]};L^2)$] 
\label{strong continuity in time}
Under the assumptions of \cref{energyconsforU}, we infer that $U \in C([0, T]; L^2(\R^d; \R^d))$. Indeed, the map $t\to U(t)$ is weakly continuous in $L^2$ and, in virtue of \cref{energy conservation for U}, $t\mapsto U(t)$ has constant $L^2$-norm. 
\end{rem}

We are in the position to prove the validity of inequality~\eqref{dis_bella}.

\begin{prop} [Key estimate on the relative energy] \label{relative energy estimate}
Under the assumptions of \cref{t:main1}, inequality~\eqref{dis_bella} holds for every $\tau \in [0, T]$.
\end{prop}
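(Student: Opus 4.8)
The plan is to run the relative energy method, the two genuinely new ingredients being the differential identity of \cref{third term} and a systematic use of the symmetry of the quadratic tensors. First I would expand
\[
E_{\rel}(\tau) = \frac12\int_{\R^d}\abs{u(\tau)}^2\,dx + \frac12\int_{\R^d}\abs{U(\tau)}^2\,dx - \int_{\R^d} u(\tau)\cdot U(\tau)\,dx
\]
and bound the three summands separately. The first is $\le\frac12\int_{\R^d}\abs{u_0}^2\,dx$ since $u$ is admissible (in the form valid for every $\tau$, after passing to the weakly continuous representative). For the second I would invoke \cref{energyconsforU}: since $(1+\frac{\sigma}{2})'<\infty$, the embedding~\eqref{exp_in_p} gives $\nabla^s U\in L^1([0,T];L^{(1+\frac{\sigma}{2})'})$, so that proposition applies and yields $\frac12\int_{\R^d}\abs{U(\tau)}^2\,dx=\frac12\int_{\R^d}\abs{u_0}^2\,dx$; moreover $U\in C([0,T];L^2)$ by \cref{strong continuity in time}. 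Everything thus reduces to the identity
\[
\int_{\R^d} u(\tau)\cdot U(\tau)\,dx = \int_{\R^d}\abs{u_0}^2\,dx + \int_0^\tau\!\int_{\R^d}\nabla^s U\colon(u-U)\otimes(u-U)\,dx\,dt ,
\]
since combining it with the two bounds gives $E_{\rel}(\tau)\le-\int_0^\tau\!\int_{\R^d}\nabla^s U\colon(u-U)\otimes(u-U)$, which is~\eqref{dis_bella} because $(U-u)\otimes(u-U)=-(u-U)\otimes(u-U)$.

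To prove this identity I would test the weak formulation of $u$ with the space mollification $U_\e$; this is admissible because $U_\e$ is divergence-free and has the Lipschitz regularity required of test functions, exactly as checked in the proof of \cref{energyconsforU}. I would then substitute the mollified Euler equation $\partial_t U_\e=-\divergence(U\otimes U)_\e-\nabla P_\e$. The pressure term drops, since $P_\e(\cdot,t)\in W^{1,2}(\R^d)$ — by Calderón--Zygmund applied to $U\otimes U\in L^{1+\frac{\sigma}{2}}$ — pairs to zero against the divergence-free $u$. Moving the mollifier onto $u$ and using \cref{third term} to replace $\divergence(U\otimes U)^j$ by $2U^i(\nabla^s U)^{ij}-\partial_j\big(\frac{\abs{U}^2}{2}\big)$, the remaining full-gradient piece pairs to zero against $\divergence u_\e=0$; for the convective term I would exploit that $u\otimes u$ is symmetric, so that $u\otimes u\colon\nabla U_\e=u\otimes u\colon(\nabla^s U)_\e$. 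The outcome is
\[
-2\int_0^\tau\!\int_{\R^d}\nabla^s U\colon(U\otimes u_\e)\,dx\,dt+\int_0^\tau\!\int_{\R^d} u\otimes u\colon(\nabla^s U)_\e\,dx\,dt=\int_{\R^d} u(\tau)\cdot U_\e(\tau)\,dx-\int_{\R^d} u_0\cdot U_\e(0)\,dx .
\]

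The last step is to let $\e\to0^+$. The boundary terms converge by strong $L^2$ convergence of the mollifications, using $U\in C([0,T];L^2)$ and $U(\cdot,0)=u_0$; the two bulk terms converge by dominated convergence, since $u,U\in L^\infty_t L^{2+\sigma}_x$ and $\nabla^s U\in L^1_t L^{(1+\frac{\sigma}{2})'}_x$ (one may assume, by interpolation with $L^2$, that $\sigma\le2$, which is harmless, so that the mollified objects sit in the appropriate dual Lebesgue spaces). One obtains
\[
\int_{\R^d} u(\tau)\cdot U(\tau)\,dx-\int_{\R^d}\abs{u_0}^2\,dx=-2\int_0^\tau\!\int_{\R^d}\nabla^s U\colon(U\otimes u)\,dx\,dt+\int_0^\tau\!\int_{\R^d}\nabla^s U\colon(u\otimes u)\,dx\,dt ,
\]
and expanding $(u-U)\otimes(u-U)$ and using the symmetry of $\nabla^s U$ rewrites the right-hand side as $\int_0^\tau\!\int_{\R^d}\nabla^s U\colon(u-U)\otimes(u-U)-\int_0^\tau\!\int_{\R^d}\nabla^s U\colon(U\otimes U)$. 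Finally the self-interaction term vanishes: since $\divergence U=0$ one has $\nabla^s U\colon(U\otimes U)=\frac12\divergence(\abs{U}^2 U)$, and $\int_{\R^d}\nabla^s U\colon(U\otimes U)\,dx=\lim_{\e\to0^+}\int_{\R^d}\nabla^s U_\e\colon(U_\e\otimes U_\e)\,dx=0$, the last integral being zero because $U_\e\in L^\infty(\R^d)$ forces $\abs{U_\e}^2 U_\e\in W^{1,1}(\R^d)$. This produces the required identity for the cross term, hence~\eqref{dis_bella}, for every $\tau\in[0,T]$.

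The step I expect to be the crux is the reduction to the symmetric gradient. The weak formulation of $u$ only senses divergence-free test fields, which is precisely what removes the pressure of $U$, but in exchange $\divergence(U\otimes U)$ and the convective term must be rewritten so that the antisymmetric part of $\nabla U$ — uncontrolled by the hypotheses — never appears on its own. \cref{third term}, together with the symmetry of the tensors $u\otimes u$ and $(u-U)\otimes(u-U)$, is exactly what makes this possible; once it is in place, the mere Lebesgue integrability of $\nabla^s U$ granted by $L^{\exp}$ makes the vanishing of the mollification errors routine, the only mildly delicate point being the vanishing of the self-interaction flux $\int_{\R^d}\nabla^s U\colon(U\otimes U)\,dx$, recovered through mollification as indicated.
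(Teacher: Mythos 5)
Your proof is correct and reaches the same inequality by essentially the same toolbox (mollified test functions, the differential identity of \cref{third term}, dominated convergence), but with a genuinely different decomposition. The paper works throughout with the mollified relative energy $E_{\rel}^\e(\tau)=\tfrac12\int|u(\tau)-U_\e(\tau)|^2$: it combines the admissibility of $u$, the $\e$-level energy balance \eqref{energy_balance_Ueps_prel} for $U_\e$ (which carries the commutator $R^U_\e:\nabla^sU_\e$), and the weak formulation of $u$ tested against $U_\e$ to arrive at the single estimate \eqref{est_E_rel_eps_fin}, and only then lets $\e\to0^+$; the vanishing of the self-interaction flux $\int\nabla^sU_\e:(U_\e\otimes U_\e)\,dx$ is absorbed silently into the algebraic passage from $\nabla U_\e:(U_\e\otimes u-u\otimes u)$ to $\nabla^s U_\e:(U_\e-u)\otimes(u-U_\e)$. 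You instead split $E_{\rel}(\tau)$ into its three summands from the outset, dispatch $\tfrac12\int|u(\tau)|^2$ by admissibility and $\tfrac12\int|U(\tau)|^2$ by invoking \cref{energyconsforU} directly (which is legitimate, since the embedding \eqref{exp_in_p} gives the required $L^{(1+\sigma/2)'}$ integrability), and then isolate an exact identity for the cross term. In doing so you make explicit two facts that the paper leaves implicit: the clean identity $\int u(\tau)\cdot U(\tau)=\int|u_0|^2+\int_0^\tau\int\nabla^sU:(u-U)\otimes(u-U)$, and the vanishing of $\int_{\R^d}\nabla^sU:(U\otimes U)\,dx$, which you justify by mollifying back to $U_\e$ and using $|U_\e|^2U_\e\in W^{1,1}(\R^d)$. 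The trade-off is that your route requires you to handle these two sub-steps separately, while the paper's bookkeeping keeps them wrapped inside commutator estimates; on the other hand, your decomposition leans more transparently on the already-proved \cref{energyconsforU} rather than re-deriving it in the form of the commutator term $R^U_\e:\nabla^sU_\e\to0$. One small point worth spelling out in a final write-up: applying the distributional identity of \cref{third term} against the test function $u_\e$ (which is smooth but not compactly supported) and "moving the mollifier" onto $u$ both require a routine cut-off argument justified by the $L^p$-duality you correctly identify; similarly the claim $P_\e(\cdot,t)\in W^{1,2}$ uses $U\otimes U\in L^1\cap L^{1+\sigma/2}$ and Calderón--Zygmund on $L^p$, $1<p\le 1+\sigma/2\le2$, exactly as in the paper.
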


\begin{proof}
As already discussed above, we would like to use (a regularization of) the solution~$U$ as a test function in the weak formulation of~$u$.
To this aim, let $\e>0$ and let $U_\e$ be the  space mollification of $U$. 
As already remarked in the proof of \cref{energyconsforU}, we know that
\begin{equation*}
U_\e \in \Lip([0,T]\times\R^d;\R^d)\cap\Lip([0,T]; L^2(\R^d;\R^d) \cap W^{1,\infty}(\R^d;\R^d)).
\end{equation*}
For any $\e>0$, we let 
\begin{equation}\nonumber
    E_{\rel}^{\e} (\tau) = \frac{1}{2} \int_{\R^d} \abs{u(x, \tau) - U_{\e}(x, \tau)}^2 \, dx
    \label{E-e}
\end{equation}
and we notice that 
\begin{equation}
    E_{\rel}^\e (\tau) = \frac{1}{2} \int_{\R^d} \abs{u(x, \tau)}^2 \, dx + \frac{1}{2} \int_{\R^d} \abs{U_\e(x, \tau)}^2 \, dx - \int_{\R^d} u(\tau) \cdot U_\e(\tau)\, dx. \label{E-e 1}
\end{equation}
for all $\tau\in[0,T]$.
We now introduce the usual commutator 
$$R_\e^U = U_\e \otimes U_\e - (U \otimes U)_\e. $$
As observed in the proof of \cref{energyconsforU}, $U_\e$ satisfies the energy balance \eqref{energy_balance_Ueps_prel}, where now $U_{\e}(0) = (u_0)_\e$. Since $u$ is an admissible weak solution according to \cref{admissible solution}, by combining~\eqref{E-e 1} and~\eqref{energy_balance_Ueps_prel}, we can estimate
\begin{align}
    E_{\rel}^\e (\tau) & \leq \frac{1}{2} \int_{\R^d} \abs{u_0}^2 \, dx + \frac{1}{2}\int_{\R^d} \abs{(u_0)_\e}^2 \, dx  - \int_0^\tau \int_{\R^d} R_\e^U \colon \nabla^s U_\e \, dx \, dt - \int_{\R^d} u(\tau) \cdot U_\e(\tau) \, dx \nonumber
    \\ & = \frac{1}{2} \int_{\R^d} \abs{u_0 - (u_0)_\e}^2 \, dx - \int_0^\tau \int_{\R^d} R_\e^U \colon \nabla^s U_\e \, dx\, dt + \int_{\R^d} u_0 \cdot (u_0)_\e  \,dx \nonumber
    \\ & \hspace{0.4 cm} - \int_{\R^d} u(\tau) \cdot U_\e(\tau)\,  dx   \nonumber
    \\ & = \frac{1}{2} \int_{\R^d} \abs{u_0 - (u_0)_\e}^2 \, dx - \int_{0}^\tau \int_{\R^d} R^U_\e \colon \nabla^s U_\e  \,dx \, dt \nonumber
    \\ & \hspace{0.4 cm } - \int_0^\tau \int_{\R^d} ( \partial_t U_\e \cdot u + u \otimes u \colon \nabla U_\e) \, dx\,  dt  \nonumber.
\end{align}
By standard manipulations, we can rewrite the last term of the above chain as 
\begin{align*}
- \int_0^\tau \int_{\R^d} ( \partial_t U_\e \cdot u &+ u \otimes u \colon \nabla U_\e) \, dx\,  dt 
\\
& = 
 \int_0^\tau \int_{\R^d} ( \divergence{(U_\e \otimes U_\e)} \cdot u -\divergence{ R_\e^U } \cdot u + \nabla P_\e\cdot u - u\otimes u \colon \nabla U_\e) \, dx\,  dt  
    \\ 
& =  -\int_0^\tau \int_{\R^d}  u\cdot \divergence{R_\e^U}  \, dx\, dt 
+ \int_0^\tau \int_{\R^d} \nabla U_\e \colon (U_\e \otimes u - u \otimes u) \,dx\,  dt 
    \\ 
& = 
- \int_0^\tau \int_{\R^d}u\cdot \divergence{R_\e^U} \,dx \, dt 
+ \int_0^\tau \int_{\R^d} \nabla^s U_\e \colon (U_\e-u) \otimes (u-U_\e)\, dx \, dt
\end{align*}
where, as usual, $P_\e$ stands for the corresponding convoluted scalar pressure. 
By combining the above estimates, we conclude that 
\begin{equation}\label{est_E_rel_eps_fin}
\begin{split}
E_{\rel}^\e  (\tau) 
&\le
\frac{1}{2} \int_{\R^d} \abs{u_0 - (u_0)_\e}^2  \,dx 
- \int_{0}^\tau \int_{\R^d} R^U_\e \colon \nabla^s U_\e  \,dx  \,dt
\\
&\quad - \int_0^\tau \int_{\R^d}u\cdot \divergence{R_\e^U} \,dx \, dt 
+ \int_0^\tau \int_{\R^d} \nabla^s U_\e \colon (U_\e-u) \otimes (u-U_\e)\, dx \, dt.
\end{split}
\end{equation}
Now we need to justify how to pass to the limit as~$\e\to0^+$. 
Since $u_0, U(\cdot, \tau)\in L^2_x$, we clearly have that 
\begin{equation*}
    \lim_{\e\to 0^+} E_{\rel}^\e(\tau) = E_{\rel}(\tau), 
\qquad
\lim_{\e\to 0^+} \int_{\R^d} \abs{u_0 - (u_0)_\e}^2 \ dx = 0.
\end{equation*}
Since $\nabla^s U(\cdot,t) \in L_x^{\exp}$, thanks to~\eqref{exp_in_p} we surely have that $\nabla^s U(\cdot,t) \in L_x^{(1+\frac{\sigma}{2})'}$, so that 
\begin{equation*}
   \norm{\nabla^s U_\e(t)}_{L_x^{(1+\frac{\sigma}{2})'}} \leq \norm{\nabla^s U(t)}_{L_x^{(1+\frac{\sigma}{2})'}} \leq C(\sigma)\, \norm{\nabla^s U(t)}_{L_x^{\exp}} \in L^1([0,T]).
\end{equation*}
Thus, as already shown in the proof of \cref{energyconsforU}, we know that 
\begin{equation*}
    \lim_{\e \to 0^+}\int_0^\tau \int_{\R^d} R_\e^U \colon \nabla^s U_\e \,dx\,dt=0.
\end{equation*}
At this point, to conclude the proof, we are only left to show that
\begin{equation}
\lim_{\e \to 0^+}\int_0^\tau \int_{\R^d} \nabla^s U_\e \colon (U_\e-u) \otimes (u-U_\e )\,dx\,dt
=
\int_0^\tau \int_{\R^d} \nabla^s U\colon (U-u) \otimes (u-U)\,dx\,dt
\label{termine3}
\end{equation}
and
\begin{equation}
\lim_{\e \to 0^+}\int_0^\tau \int_{\R^d}u\cdot \divergence{R_\e^U} \,dx\,dt =0. 
\label{termine2}
\end{equation}
We now deal with each limit separately, the most delicate one being~\eqref{termine2}.

\smallskip

\textit{Proof of~\eqref{termine3}}. 
As remarked above, we have that $\nabla^s U \in L^1_t\left( L_x^{(1+\frac{\sigma}{2})'}\right)$, so that $\nabla^s U_\e(\cdot,t)\to \nabla^s U(\cdot,t)$ in $L_x^{(1+\frac{\sigma}{2})'}$ as $\e\to 0^+$. Moreover, since $U(\cdot,t) \in L_x^{2+\sigma}$, we know that  $U_\e(\cdot,t) \to U(\cdot,t)$ in $L_x^{2+\sigma}$. Hence, we have that 
\begin{equation*}
(U_\e-u) \otimes (u-U_\e  )(\cdot,t) \to (U-u)\otimes(u-U)(\cdot,t)
\quad
\text{in}\
L_x^{1+\frac{\sigma}{2}}.
\end{equation*}
Combining the above limits, we infer that 
$$\nabla^s U_\e: (U_\e-u) \otimes (u-U_\e )( \cdot\,, t) \to \nabla^s U: (U-u) \otimes (u-U)( \cdot\,, t) \quad \text{in}\ L^1_x. $$ 
In addition, by Holder's inequality, for every $\e> 0$ we have that 
\begin{align*}
 \norm{\nabla^s U_\e: (U_\e-u) &\otimes (u-U_\e )(t)}_{L_x^1} 
 \leq  \norm{\nabla^s U_\e(t)}_{L_x^{(1+\frac{\sigma}{2})'}} \norm{\left(u-U_\e\right)(t)}_{L_x^{2+\sigma}}^2 
\\ & \leq C(\sigma) \norm{\nabla^s U(t)}_{L_x^{\exp}} \norm{u}_{L_t^\infty(L_x^{2+\sigma})}^2 \norm{U}_{L_t^\infty(L_x^{2+\sigma})}^2
\in L^1([0,T]).
\end{align*} 
By combining all the bounds given above, we get~\eqref{termine3} by the Dominated Convergence Theorem.

\smallskip

\textit{Proof of~\eqref{termine2}}.
Fix $t \in (0, \tau)$.
Since $x\mapsto U_\e(x,t)$ is smooth enough and $\divergence u=0$, we can write
\begin{equation}
\begin{split}
    \int_{\R^d} \divergence{R_\e^U} \cdot u \, dx 
& = \int_{\R^d} \left( U_\e^i\, \partial_i U_\e^j - \partial_i(U^i U^j)_\e \right) u^j \, dx 
    \\ 
& = 
\int_{\R^d} \left( U_\e^i\, (\partial_i U_\e^j + \partial_j U_\e^i) - \partial_i (U^i U^j)_\e - \partial_j \left( \frac{U^i U^i}{2}\right)_{\e}\, \right) u^j \, dx 
    \\ 
& = 
\int_{\R^d} \left( 2U_\e^i\, ( \nabla^s U_{\e})^{ij} - \partial_i (U^i U^j)_\e - \partial_j \left( \frac{U^i U^i}{2}\right)_\e \,\right) u^j \, dx
\\
    &=
2\int_{\R^d}\left( U_\e^i\, ( \nabla^s U_{\e})^{ij} -\left( U^i \,(\nabla^s U)^{ij} \right)_\e \,\right) u^j \, dx,
\end{split} 
\label{last formula to check}
\end{equation}
where in the last equality we exploited \cref{third term}.
In addition, since $U_\e(\cdot,t) \to U(\cdot,t)$ in $L_x^{2+\sigma}$ and $\nabla^s U_\e(\cdot,t)\to \nabla^s U(\cdot,t)$ in $L_x^{(1+\frac{\sigma}{2})'}$, we get that 
\begin{equation*}
(U_\e\cdot \nabla^s U_\e)(\cdot,t) - (U \cdot \nabla^s U)_\e(\cdot,t) \to 0
\
\text{in}\ 
L_x^{(2+\sigma)'}.
\end{equation*}
All in all, since $u \in L^{2+\sigma}_x$, in virtue of~\eqref{last formula to check} we infer that 
\begin{equation*}
    \lim_{\e \to 0^+} \int_{\R^d} u\cdot \divergence{R^U_\e}  \ dx = 0
\end{equation*}
for a.e.\  $t\in (0,\tau)$.
To conclude, we notice that, for $\e>0$ and $t \in (0, \tau)$, we can use~\eqref{last formula to check} to estimate
\begin{align*}
    \abs*{\int_{\R^d}u(t) \cdot \divergence{R^U_\e(t)}  \, dx } &\leq \norm{u(t)}_{L_x^{2+\sigma}}\|U(t)\|_{L_x^{2+\sigma}} \norm{\nabla^s U(t)}_{L_x^{(1+\frac{\sigma}{2})'}} \\
    &\leq C(\sigma)\, \norm{u}_{L^\infty_t(L_x^{2+\sigma})}\|U\|_{L^\infty_t(L_x^{2+\sigma})}  \norm{\nabla^s U(t)}_{L_x^{\exp}}
\in L^1([0,T]).
\end{align*}
We thus get~\eqref{termine3} by the Dominated Convergence Theorem and the proof is complete.
\end{proof}

\begin{rem}[Uniform bound on relative energy for small times] 
\label{uniform bound}
As a by-product of \cref{relative energy estimate}, we can obtain a uniform bound on the relative energy for small times, which will be crucial in the proof of \cref{t:main1}. 
Precisely, under the assumptions of \cref{t:main1}, from inequality~\eqref{dis_bella} we get that
\begin{align*}
    E_{\rel} (\tau) & \leq \int_0^\tau \norm{\nabla^s U(t)}_{L^{(1+\sigma/2)'}_x} \norm{U(t) - u(t)}_{L_x^{2+\sigma}}^2 \, dt
\end{align*}
for any $\tau>0$.
Consequently, for any given $ \eta>0 $ we can find a time $t_\eta\in(0,T]$, depending on $\eta$ only, such that 
\begin{equation}
    \sup_{\tau\in[0,t_\eta]} E_{\rel}(\tau) \leq \eta. \label{uniform bound estimate} 
\end{equation}
\end{rem}

We are now ready to prove \cref{t:main1} by combining \cref{relative energy estimate} together with some basic integral inequality arguments. 
Our strategy is very similar to that of~\cite{MR08}, although some adaptions are needed in order to deal with our weaker integrability assumption $\nabla^sU\in L^{\exp}_x$ and the weak-strong uniqueness result.

\begin{proof} [Proof of \cref{t:main1}]
We divide the proof in three steps.

\smallskip 

\textit{Step~1: integral inequality for the relative energy}.
By \cref{relative energy estimate}, for every $\tau \in (0, T)$ we have that 
\begin{equation}
    E_{\rel}(\tau) \leq  \int_0^\tau \int_{\R^d} \nabla^s U \colon (U-u) \otimes (u-U) \, dx \, dt. \label{starting point}
\end{equation}
Let us set 
\begin{equation*}
\alpha(x,t) = \abs*{U(x,t)-u(x,t)}^2,
\qquad
(t,x)\in[0,T]\times\R^d
\end{equation*}
and, for any given $m>0$,
\begin{equation}
\label{alpha_rotto}
\alpha_m^+ = \alpha\, \mathbf{1}_{\set*{\alpha > m}},
\qquad
\alpha_m^- = \alpha\, \mathbf{1}_{\set*{\alpha\le m}}.
\end{equation}
By \cref{orlicz-holder}, we can exploit~\eqref{starting point} to estimate
\begin{equation}
\label{cia1}
\begin{split}
    \|\alpha(\tau)\|_{L_x^1} 
 &\leq 
2 \int_0^\tau \int_{\R^d} \alpha\, \abs{\nabla^s U} \, dx\, dt 
    \\ 
& = 
2 \int_0^\tau \int_{\R^d} \alpha_m^-\,\abs{\nabla^s U} \, dx \, dt 
+ 
2 \int_0^\tau \int_{\R^d} \alpha_m^+\, \abs{\nabla^s U}\, dx \, dt 
    \\ 
& \leq 
C \int_0^\tau \norm{\nabla^s U(t) }_{L_x^{\exp}}\, \norm{\alpha_m^-(t)}_{L_x^1} \bigg(\, \abs*{ \log\norm{\alpha_m^-(t)}_{L_x^1}} + \log\big(\norm{\alpha_m^-(t)}_{L_x^\infty} + 1\big) +1 \bigg)\, dt 
\\
& \quad + 2
\int_0^\tau \int_{\R^d} \alpha_m^+\, \abs{\nabla^s U}\, dx \, dt
\end{split}
\end{equation}
for all $\tau\in(0,T)$. Recalling the definition in~\eqref{alpha_rotto}, we have that 
\begin{equation*}
\norm{\alpha^-_m(t)}_{L_x^\infty}\le m
\end{equation*} 
for all $t\in[0,T]$. Therefore, observing that the function $r\mapsto r\,(|\log r|+\log (m+1) +1)$ is non-decreasing for all $r\ge0$ whenever $m>0$, 
we can estimate 
\begin{equation}
\label{cia2}
\begin{split}
\norm{\alpha_m^-(t)}_{L_x^1} 
&
\bigg(\abs*{ \log\norm{\alpha_m^-(t)}_{L_x^1}} 
+
\log\big(\norm{\alpha_m^-(t)}_{L_x^\infty} + 1\big) +1 \bigg)
\\
&\le
\norm{\alpha_m^-(t)}_{L_x^1} \bigg( \abs*{ \log\norm{\alpha_m^-(t)}_{L_x^1}} 
+
\log(m + 1) +1 \bigg)
\\
&\le 
\norm{\alpha(t)}_{L_x^1} \bigg(\abs*{ \log\norm{\alpha(t)}_{L_x^1}} + \log(m + 1) +1 \bigg),
\end{split}
\end{equation}
for all $t\in[0,T]$. On the other side, since $\nabla^s U (\cdot,t) \in L_x^{\exp}$, by~\eqref{exp_in_p} we surely have $\nabla^s U(\cdot,t) \in L_x^{(1+\sigma/4)'}$.
Moreover, since $u,U\in X_{u_0}$, we have 
\begin{equation*}
\alpha \in L^\infty([0,T];L^1(\R^d)) \cap L^\infty([0,T];L^{1+\frac{\sigma}{2}}(\R^d)),
\end{equation*}
so that
\begin{equation}
\label{fianco}
\alpha \in L^\infty ([0,T];L^{1+ \frac{\sigma}{4}}(\R^d))
\end{equation}
by interpolation.
Therefore, recalling the definition of~$\alpha_m^+$ in~\eqref{alpha_rotto}, we can estimate
\begin{equation}
\label{cia3}
\begin{split}
   \int_{\R^d} \alpha_m^+(t,x)&\, \abs{\nabla^s U(t,x)} \, dx 
\leq 
\norm{\alpha_m^+(t)}_{L_x^{1+\frac{\sigma}{4}}}\, \norm{\nabla^s U(t)}_{L_x^{(1+\frac{\sigma}{4})'}} 
    \\ 
& \leq 
C(\sigma) \, \abs{\{\alpha(t,\,\cdot\,)> m\} }^{\frac{2\sigma}{(2+\sigma)(4+\sigma)}}\, \norm{\alpha(t)}_{L_x^{1+\frac{\sigma}{2}}} 
\,\norm{\nabla^s U(t)}_{L_x^{\exp}}
    \\ 
& \leq 
C(\sigma)\,m^{-\frac{\sigma}{4+\sigma}} \, \norm{\alpha(t)}_{L_x^{1+\frac{\sigma}{2}}}^{1+\frac{\sigma}{4+\sigma}} 
\, \norm{\nabla^s U(t)}_{L_x^{\exp}}
\end{split}
\end{equation}
for all $t\in(0,T)$. 
All in all, by putting~\eqref{cia2} and~\eqref{cia3} into~\eqref{cia1}, we get that  
\begin{align}
    \norm{\alpha(\tau)}_{L_x^1} & \leq C \int_0^\tau \norm{\nabla^s U(t)}_{L^{\exp}_x} \norm{\alpha(t)}_{L_x^1} \left( \abs{ \log(\norm{\alpha(t)}_{L_x^1})} + \log(m + 1) +1 \right)  dt 
    \\ & \quad + C(\sigma)\, m^{-\frac{\sigma}{4+\sigma}}\int_0^\tau \norm{\nabla^s U(t)}_{L^{\exp}_x}  \norm{\alpha(t)}_{L_x^{1+\frac{\sigma}{2}}}^{1+\frac{\sigma}{4+\sigma}}  dt \nonumber
    \\ & \leq m^{-\frac{\sigma}{4+\sigma}}\int_0^T f(t) \,dt  +  \int_0^\tau f(t)\,  \norm{\alpha(t)}_{L_x^1} \left( \ \left| \log(\norm{\alpha(t)}_{L_x^1})\right| + \log(m + 1) +1 \right)  dt
    \label{def funzione f}
\end{align}
for all $\tau\in[0,T]$, where we denoted
$$f(t) = \norm{\nabla^s U(t)}_{L_x^{\exp}} \left(C+C(\sigma) \norm{\alpha(t)}_{L_x^{1+\frac{\sigma}{2}}}^{1+\frac{\sigma}{\sigma+4}} \right) \in L^1([0, T]). $$

\smallskip 

\textit{Step~2:  Gr\"onwall's inequality}.
Letting 
\begin{equation*}
y(\tau)= \norm{\alpha(\tau)}_{L_x^1},
\quad
C_T=\int_0^T f(t)\,dt,
\quad
\theta=\frac{\sigma}{4+\sigma},
\quad
C_m=(1+m)e,
\end{equation*}
we can equivalently rewrite inequality~\eqref{def funzione f} as 
\begin{equation}\label{dis_per_y}
y(\tau)\leq \frac{C_T}{m^{\theta}}+\int_0^\tau f(t)y(t)\big( \left| \log y(t)\right| + \log C_m \big)\,dt
\end{equation}
for all $\tau\in[0,T]$.
Now define $\tilde y_m(\tau)=y(\tau)+\frac{1}{m}$. 
In view of \cref{uniform bound} we can pick a time $t_0>0$ such that $y(\tau)\leq \frac12$ for every $\tau\in [0,t_0]$. 
In particular, for $m>2$ we get 
\begin{equation}\label{bound_per_ytilde}
    \frac{1}{m}\leq \tilde y_m(\tau)\leq 1 \quad \text{for all}\  \tau\in [0,t_0].
\end{equation}
Possibly choosing $t_0>0$ even smaller, we can also assume
that 
\begin{equation}
\label{controllo_int_f}
\int_0^{t_0}f(t)\,dt<\frac\theta4.    
\end{equation}
We remark that the choice of $t_0$ realizing~\eqref{bound_per_ytilde} and~\eqref{controllo_int_f} does not depend on $m$.
Since the function $y\mapsto y\left( \left| \log y\right| +\log C_m\right)$ is increasing for each $m>0$, possibly enlarging the constant $C_T$ from line to line in what follows, from~\eqref{dis_per_y} and~\eqref{bound_per_ytilde} we obtain
\begin{align*}
    \tilde y_m(\tau)&\leq \frac{1}{m}+\frac{C_T}{m^\theta}+\int_0^\tau f(t) \,\tilde y_m(t)\left( \left| \log \tilde y_m(t)\right|+\log C_m\right)\,dt\\
    &\leq \frac{C_T}{m^\theta}+\int_0^\tau f(t) \,\tilde y_m(t)\log  (m \,C_m)\,dt
\end{align*}
for all $\tau\in[0,t_0]$.
By Gr\"onwall's inequality, we thus deduce that
$$
\tilde y_m(\tau)\leq \frac{C_T}{m^\theta} e^{ \log (m \, C_m) \int_0^\tau f(t)\,dt }\leq C_T\,m^{-\theta+2\int_0^\tau f(t)\,dt}
\le
C_Tm^{-\frac\theta2}
$$
for all $\tau\in[0,t_0]$, in virtue of~\eqref{controllo_int_f}.
Therefore, recalling that $y=\tilde y_m-\frac{1}{m}$ and letting $m\to+\infty$, we conclude that 
\begin{equation}\label{ynulla}
    y(\tau)= 0 \quad \text{for all}\ \tau\in [0,t_0].
\end{equation}

\smallskip 

\textit{Step~3: iteration argument}.
Let $t_0 \in (0, T) $ be the time fixed at the end of the previous step. 
Note that $t_0$ uniquely depends on~$\sigma$ and on suitable integral norms of~$u$, $U$ and~$\nabla^s U$.  
In order to iterate the argument of Step~2 in the time interval $(t_0,2t_0)$, we need to check that $u$ is still an admissible weak solution in this interval.
In virtue of~\eqref{ynulla}, we know that $u\equiv U$ in $(0,t_0)$.
Therefore, since~$U$ is energy conservative by \cref{energyconsforU} and since $u$ is admissible in the whole time interval $(0,T)$, we can infer that 
$$
\int_{\R^d}|u(t,x)|^2\,dt
\leq 
\int_{\R^d}|u_0(x)|^2\,dt
=
\int_{\R^d}|U(0,x)|^2\,dt
=
\int_{\R^d}|U(t_0,x)|^2\,dt
=
\int_{\R^d}|u(t_0,x)|^2\,dt
$$
for all $t\in[0,T]$.
Thus $u$ is admissible in $(t_0,2t_0)$, so the argument of Step~2 used to prove~\eqref{ynulla} can be iterated in subsequent intervals of (at most) size~$t_0$. 
This iteration eventually leads to $E_{\rel}\equiv 0$ in $(0,T)$, concluding the proof. 
\end{proof}

\begin{rem}[Adding an external force]\label{R:ext_force}
Let us discuss how it is possible to add an external force $F$ to our analysis. Assume that the right-hand side of the first equation in~\eqref{euler} is equal to $F\in L^1([0,T];L^2(\R^d))$. In this case, note that the notion of \emph{admissibility} from \cref{admissible solution} has to be replaced with
\begin{equation*}
    \frac12  \|u(t)\|^2_{L^2(\R^d)}  \leq \frac12  \|u_0\|^2_{L^2(\R^d)}  + \int_0^t \int_{\R^d} F\cdot u\,dx\,ds
\end{equation*}
for $t\in[0,T]$.
Moreover, the energy balance \eqref{energy_balance_Ueps_prel} for $U_\varepsilon$ becomes
\begin{equation*}
E_{U_{\e}}(\tau)-E_{U_{\e}}(0)=-\int_0^\tau \int_{\R^d}R^U_{\e} :\nabla^s U_{\e} \,dx \, dt + \int_0^\tau \int_{\R^d} F_\e \cdot U_\e\,dx\,dt
\end{equation*}
and, consequently, \eqref{En_bal_U} becomes
\begin{equation*}
    \frac12  \|U(t)\|^2_{L^2(\R^d)} = \frac12 \|U(0)\|^2_{L^2(\R^d)}+ \int_0^t \int_{\R^d} F\cdot U\,dx\,ds,
\end{equation*}
for $t\in[0,T]$.
Furthermore, it is easy to see that, in this case, the extra term 
\begin{equation*}
E^\varepsilon_{F}(\tau):=\int_0^\tau \int_{\R^d} \left( F - F_\varepsilon \right) \cdot u \, dx\, dt + \int_0^\tau \int_{\R^d} \left( F_\e - F\right) \cdot U_\e \, dx\, dt 
\end{equation*}
appears as an additional error in the right-hand side of the estimate \eqref{est_E_rel_eps_fin}.  By the Dominated Convergence Theorem, we have $E^\varepsilon_F(\tau)\rightarrow 0$ uniformly in $[0,T]$. Thus, the very same inequality \eqref{dis_bella} still holds, and the proof of \cref{t:main1} does not modify.
\end{rem}

\section{Proof of \texorpdfstring{\cref{t:main2}}{convergence}}
\label{sec:proof_inviscid}

The proof of \cref{t:main2} is very similar to that of \cref{t:main1}. 
More precisely, given $\nu>0$, we want to estimate the \emph{viscous relative energy} 
\begin{equation}
    E_{\rel}^\nu(\tau) = \frac{1}{2} \int_{\R^d} \abs{U(\tau, x) - u^\nu(\tau,x)}^2 \, dx
\end{equation}
uniformly with respect to $\tau$. 
To begin, we prove the following result, which rephrases \cref{relative energy estimate} in the present context.

\begin{prop}[Key estimate on viscous relative energy] \label{relative energy estimate van visc}
Fix $\nu > 0$. Under the assumptions of \cref{t:main2}, we have
\begin{equation}
    E_{\rel}^\nu(\tau_2) \leq E_{\rel}^\nu(\tau_1) + \int_{\tau_1}^{\tau_2} \int_{\R^d} \nabla^s U : (U-u^\nu)\otimes (u^{\nu}-U) \, dx \, dt  + \nu \int_{\tau_1}^{\tau_2} \int_{\R^d} \nabla u^{\nu} \colon \nabla^s U \, dx \, dt \label{relative energy estimate bis}
\end{equation}
for every $\tau_2\in (0,T)$ and a.e.\  $\tau_1<\tau_2$, including $\tau_1=0$.
\end{prop}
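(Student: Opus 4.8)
The plan is to replay, essentially verbatim, the relative-energy argument proving \cref{relative energy estimate}, the only structural novelty being the term generated by the Laplacian in~\eqref{navierstokes}. Fix $\nu>0$ and $\e>0$, let $U_\e$ be the space mollification of $U$, and set $E^{\nu,\e}_{\rel}(\tau)=\frac12\int_{\R^d}\abs{u^\nu(x,\tau)-U_\e(x,\tau)}^2\,dx$. Exactly as in the proof of \cref{relative energy estimate}, $U_\e$ belongs to the test-function class of \cref{navierstokes system} (it is Lipschitz on $\R^d\times(0,T)$ and Lipschitz in time with values in $W^{1,\infty}\cap W^{1,2}$, and divergence-free), and, since the hypotheses of \cref{t:main2} together with~\eqref{exp_in_p} give $\nabla^s U\in L^1([0,T];L^{(1+\sigma/2)'}(\R^d))$, \cref{energyconsforU} applies: $U$ conserves its energy and, by \cref{strong continuity in time}, $U\in C([0,T];L^2(\R^d))$. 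In particular $U_\e(\tau)\to U(\tau)$ in $L^2(\R^d)$ and $E^{\nu,\e}_{\rel}(\tau)\to E^\nu_{\rel}(\tau)$ for every $\tau\in[0,T]$. I would then expand $E^{\nu,\e}_{\rel}(\tau_2)-E^{\nu,\e}_{\rel}(\tau_1)$ into the contributions of $\frac12\norm{u^\nu}_{L^2}^2$, of $\frac12\norm{U_\e}_{L^2}^2$, and of the cross term $\int_{\R^d}u^\nu\cdot U_\e\,dx$.

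For the first contribution, the energy inequality in \cref{navierstokes system} gives $\frac12\norm{u^\nu(\tau_2)}_{L^2}^2-\frac12\norm{u^\nu(\tau_1)}_{L^2}^2\le-\nu\int_{\tau_1}^{\tau_2}\norm{\nabla u^\nu(t)}_{L^2}^2\,dt\le 0$, and this favourable dissipation term is simply discarded. For the second, the smoothness of $U_\e$ yields the local energy balance already established in the proof of \cref{energyconsforU}, hence $\frac12\norm{U_\e(\tau_2)}_{L^2}^2-\frac12\norm{U_\e(\tau_1)}_{L^2}^2=-\int_{\tau_1}^{\tau_2}\int_{\R^d}R_\e^U\colon\nabla^s U_\e\,dx\,dt$ with $R_\e^U=U_\e\otimes U_\e-(U\otimes U)_\e$. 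For the cross term, using $U_\e$ as a test function in the weak formulation of $u^\nu$ produces $\int_{\tau_1}^{\tau_2}\int_{\R^d}\big(u^\nu\cdot\partial_t U_\e+u^\nu\otimes u^\nu\colon\nabla U_\e-\nu\,\nabla u^\nu\colon\nabla U_\e\big)\,dx\,dt$; the first two summands are rearranged exactly as in \cref{relative energy estimate} — via the mollified momentum equation for $U_\e$ together with the differential identity of \cref{third term} — into $-\int u^\nu\cdot\divergence R_\e^U\,dx\,dt+\int\nabla^s U_\e\colon(U_\e-u^\nu)\otimes(u^\nu-U_\e)\,dx\,dt$, while the viscous summand, after an integration by parts exploiting $\divergence u^\nu=\divergence U_\e=0$, contributes the term $\nu\int_{\tau_1}^{\tau_2}\int_{\R^d}\nabla u^\nu\colon\nabla^s U_\e\,dx\,dt$. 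Collecting these bounds yields~\eqref{relative energy estimate bis} at the level of $\e>0$, with $U$ replaced by $U_\e$.

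It then remains to let $\e\to0^+$. The convergences $E^{\nu,\e}_{\rel}(\tau_i)\to E^\nu_{\rel}(\tau_i)$, $\int\int R_\e^U\colon\nabla^s U_\e\,dx\,dt\to0$, $\int\int u^\nu\cdot\divergence R_\e^U\,dx\,dt\to0$, and $\int\int\nabla^s U_\e\colon(U_\e-u^\nu)\otimes(u^\nu-U_\e)\,dx\,dt\to\int\int\nabla^s U\colon(U-u^\nu)\otimes(u^\nu-U)\,dx\,dt$ are obtained word for word as in \cref{relative energy estimate}: one uses $\nabla^s U\in L^1_t(L^{\exp}_x)\subset L^1_t(L^{(1+\sigma/2)'}_x)$, the uniform bounds $\norm{u^\nu}_{L^\infty_t(L^{2+\sigma}_x)},\norm{U}_{L^\infty_t(L^{2+\sigma}_x)}\le\norm{h}_{L^\infty}$ from the third hypothesis of \cref{t:main2}, and the Dominated Convergence Theorem, the delicate limit~\eqref{termine2} being again reduced, through \cref{third term}, to the convergence $(U_\e\cdot\nabla^s U_\e)-(U\cdot\nabla^s U)_\e\to0$ in $L^{(2+\sigma)'}_x$ paired against $u^\nu\in L^{2+\sigma}_x$. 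The single genuinely new limit is the viscous one: since the second hypothesis of \cref{t:main2} forces $\nabla^s U\in L^2([0,T];L^2(\R^d))$, one has $\nabla^s U_\e\to\nabla^s U$ in $L^2_t(L^2_x)$ while $\nabla u^\nu\in L^2_t(L^2_x)$ is fixed, whence $\nu\int_{\tau_1}^{\tau_2}\int_{\R^d}\nabla u^\nu\colon\nabla^s U_\e\,dx\,dt\to\nu\int_{\tau_1}^{\tau_2}\int_{\R^d}\nabla u^\nu\colon\nabla^s U\,dx\,dt$. Passing to the limit in all terms yields~\eqref{relative energy estimate bis} for every $\tau_2\in(0,T)$ and a.e.\ $\tau_1<\tau_2$, $\tau_1=0$ included — exactly the times for which the energy inequality and the weak formulation of $u^\nu$ are available. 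I do not expect a substantial obstacle here: the hard point is nominally the commutator limit~\eqref{termine2}, but that argument is imported unchanged from \cref{relative energy estimate}, so the real content of the proposition is to check that the quantitative assumptions of \cref{t:main2} are precisely those required by the imported arguments — the $L^1_t(L^{\exp}_x)$ bound on $\nabla^s U$ for the commutator and the cross term, the $L^\infty_t(L^{2+\sigma}_x)$ bounds on $u^\nu$ and $U$ for the Hölder estimates, and the $L^2_t(L^2_x)$ bound on $\nabla^s U$ for the new viscous term — together with the mild bookkeeping that $U_\e$ belongs to the (slightly larger) test-function class of \cref{navierstokes system} and that the endpoints $\tau_1,\tau_2$ are simultaneously admissible for the energy inequality and the weak formulation of $u^\nu$.
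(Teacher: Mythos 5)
Your proposal follows the paper's proof essentially verbatim: mollify $U$, expand $E^{\nu,\e}_{\rel}$, use the Leray--Hopf energy inequality for $u^\nu$ (discarding the favourable dissipation), the commutator energy balance for $U_\e$, and $U_\e$ as a test function in the weak formulation of $u^\nu$, then let $\e\to0^+$ via \cref{third term} and the Dominated Convergence Theorem, the sole new limit being the viscous one, which you correctly control using $\nabla^s U\in L^2_t(L^2_x)$ (from the hypothesis $\norm{\nabla^s U(t)}_{L^2}\le g(t)$ with $g\in L^2$) against $\nabla u^\nu\in L^2_t(L^2_x)$. One small inaccuracy in the closing sentence: the weak formulation in \cref{navierstokes system} is only guaranteed for a.e.\ $\tau_2$, so to obtain the inequality for \emph{every} $\tau_2\in(0,T)$ you still need the paper's opening observation that both $u^\nu$ and $U$ may be taken in $C([0,T];w\text{-}L^2)$, so the left-hand side is weakly lower semicontinuous and the right-hand side is continuous in $\tau_2$; with that remark inserted, your argument coincides with the paper's.
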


\begin{proof} 
It is enough to prove \eqref{relative energy estimate bis} for a.e.\  $\tau_1,\tau_2\in (0,T)$. Indeed, since both~$u^\nu$ and~$U$ can be redefined on a negligible set of times in order to have $u^\nu,U\in C_t(w-L^2_x)$, having~\eqref{relative energy estimate bis} for a.e.\ time $\tau_2$ is equivalent to have the same inequality for every $\tau_2$ by lower semicontinuity of the $L^2_x$ norm under weak convergence.

As before, we would like to use $U$ as a test function in the weak formulation of $u^\nu$. To this aim, we consider the space mollification $U_\e$ of $U$. As shown in the proof of \cref{energyconsforU}, it is immediate to check that 
\begin{equation*}
U_\e \in \Lip([0,T]\times\R^d;\R^d)\cap\Lip([0,T]; W^{1,\infty}(\R^d;\R^d)\cap W^{1,2}(\R^d; \R^d)).
\end{equation*} 
Now, let us define 
\begin{equation}\nonumber
    E_{\rel}^{\nu,\e} (\tau) = \frac{1}{2} \int_{\R^d} \abs{u^\nu(x, \tau) - U_{\e}(x, \tau)}^2 \, dx \label{E-e-nu}
\end{equation}
for all $\tau\in[0,T]$ and $\e>0$. 
We have that 
\begin{equation}
    E_{\rel}^{\nu,\e} (\tau) = \frac{1}{2} \int_{\R^d} \abs{u^\nu(x, \tau)}^2 \, dx + \frac{1}{2} \int_{\R^d} \abs{U_\e(x, \tau)}^2 \, dx - \int_{\R^d} u^\nu(x,\tau) \cdot U_\e(x,\tau)\, dx. \label{E-e-nu 1}
\end{equation}
Since $U_\e$ is regular enough, we can introduce the usual commutator 
$$R_\e^U = U_\e \otimes U_\e - (U \otimes U)_\e $$
and recognize the validity of the following energy balance for $U_\e$,
\begin{align*}
    \frac{1}{2} \int_{\R^d} \abs{U_\e(x, \tau_2)}^2  \, dx 
    & = \frac{1}{2} \int_{\R^d} \abs{U_\e (x,\tau_1)}^2 \, dx - \int_{\tau_1}^{\tau_2} \int_{\R^d} R_{\e}^U \colon \nabla U_\e \, dx \, dt 
    \\ & = \frac{1}{2} \int_{\R^d} \abs{U_\e (x,\tau_1)}^2 \, dx - \int_{\tau_1}^{\tau_2} \int_{\R^d} R_{\e}^U \colon \nabla^s U_\e \, dx \, dt,
\end{align*}
where in the last equality we used that $R_\e^U$ is a symmetric matrix. Since $u^\nu$ is a Leray--Hopf weak solution of the Navier--Stokes system~\eqref{navierstokes}, we have that 
\begin{equation}
    \int_{\R^d} \abs{u^\nu(x,\tau_2)}^2 \ dx \leq \int_{\R^d} \abs{u^\nu(x,\tau_1)}^2 \ dx \nonumber
\end{equation}
for every $\tau_2\in (0,T)$ and a.e.\ $\tau_1<\tau_2$, including $\tau_1=0$. Thus, with the same computations shown in details in the proof of \cref{relative energy estimate}, by \eqref{E-e-nu 1} we obtain that  
\begin{align}
    E_{\rel}^{\nu,\e} (\tau_2) & \leq  E_{\rel}^{\nu,\e} (\tau_1) - \int_{\tau_1}^{\tau_2} \int_{\R^d} R_\e^U \colon \nabla^s U_\e \, dx\,  dt - \int_{\tau_1}^{\tau_2} \int_{\R^d}u\cdot \divergence{R_\e^U} \, dx \, dt \nonumber
    \\ & \hspace{0.4 cm} + \int_{\tau_1}^{\tau_2} \int_{\R^d} \nabla^s U_\e \colon (U_\e-u^\nu) \otimes (u^\nu - U_\e )\, dx \, dt + \nu \int_{\tau_1}^{\tau_2} \int_{\R^d} \nabla u^\nu \colon \nabla^s U_\e \, dx \, dt \nonumber
\end{align}
for a.e.\ $\tau_2\in (0,T)$ and a.e.\ $\tau_1<\tau_2$, including $\tau_1=0$. Letting $\e\to 0^+$ as in \cref{relative energy estimate} and using \cref{third term}, we get \eqref{relative energy estimate bis}, concluding the proof.
\end{proof}

\begin{rem}[Uniform bound on the viscous relative energy for small times] \label{uniform in nu bound}
As a product of \cref{relative energy estimate van visc}, we can obtain a  bound (uniform in $\nu$) of the relative energy for small times, which will be crucial in the proof of \cref{t:main2}. 
Precisely, under the assumptions of \cref{t:main2}, from inequality~\eqref{relative energy estimate bis} we get that
\begin{align*}
    E_{\rel}^\nu (\tau+\tau_1) & \leq E_{\rel}^\nu (\tau_1)+  \int_{\tau_1}^{\tau+\tau_1} \norm{\nabla^s U(t)}_{L^{(1+\frac{\sigma}{2})'}_x} \norm{ U(t) - u^\nu(t)}_{L_x^{2+\sigma}}^2 \, dt  
    \\ & \quad +\nu \int_{\tau_1}^{\tau+\tau_1} \norm{\nabla u^\nu(t)}_{L^2_x} \norm{\nabla^s U(t)}_{L^2_x} \, dt  
    \\ &\leq E_{\rel}^\nu (\tau_1) +C(\sigma)  \int_{\tau_1}^{\tau+\tau_1} \left( f(t)\, h(t)^2 + g(t)^2 \right) \, dt.
\end{align*}
for any $\tau>0$ and for a.e.\  $\tau_1\in [0,T]$, including $\tau_1=0$, and for any $\nu\in (0,1)$.
Since $f h^2 + g^2 \in L^1_t$, we deduce that, for any given $\eta>0$, we can find a time $\tau_\eta\in(0,T]$, depending on~$\eta$ only, such that 
\begin{equation}
     E_{\rel}^\nu (\tau+\tau_1)  \leq E_{\rel}^\nu (\tau_1)+ \eta
     \label{uniform in nu bound estimate}
\end{equation}
for all $\tau\in [0,\tau_\eta]$ and a.e.\  $\tau_1\in [0,T]$, including $\tau_1=0$.
\end{rem}

We can now detail the proof of \cref{t:main2}. 
The strategy is very similar to that followed in the proof of \cref{t:main1}, since we just need to combine  \cref{relative energy estimate van visc} together with a (integral) Gr\"onwall-type argument. 

\begin{proof} [Proof of \cref{t:main2}] We divide the proof in three steps.
\smallskip 

\textit{Step~1: integral inequality for the viscous relative energy}.
Since $E_{\rel}^\nu(0)=0$ by assumption, by \cref{relative energy estimate van visc} we can estimate
\begin{equation}
    E_{\rel}^\nu(\tau) \leq  \int_0^\tau \int_{\R^d} \nabla^s U: (U-u^\nu)\otimes ( u^\nu-U) \, dx \, dt  + \nu \int_0^\tau \int_{\R^d} \nabla u^{\nu} \colon \nabla^s U \, dx \, dt 
    \label{starting point bis}
\end{equation}
for all $\tau \in (0, T)$.
As in the proof of \cref{t:main1}, we consider the quantity 
\begin{equation*}
\alpha_\nu(x,t) = \abs*{U(x,t)-u^\nu(x,t)}^2,
\qquad
(t,x)\in[0,T]\times\R^d.
\end{equation*}
Letting $\gamma \in (0, 1]$ to be chosen later, we set
\begin{equation}
\label{alpha_rotto-nu}
\alpha_\nu^+ = \alpha\, \mathbf{1}_{\set*{\alpha > \nu^{-\gamma}}},
\qquad
\alpha_\nu^- = \alpha\, \mathbf{1}_{\set*{\alpha \leq \nu^{-\gamma} }},
\end{equation}
for any $\nu \in (0,1)$.
Thanks to~\eqref{starting point bis} and \cref{orlicz-holder}, we can estimate
\begin{equation}
\label{alpha-nu stima fino a g}
\begin{split}
      \norm{\alpha_\nu(\tau)}_{L^1_x}  &\leq  2 \int_0^\tau \int_{\R^d} \alpha_\nu(t)\, \abs{\nabla^s U(t)} \, dx \, dt  + 2 \nu \int_0^\tau \norm{\nabla u^\nu(t)}_{L^2_x}\,\norm{\nabla^s U(t)}_{L^2_x} \, dt 
    \\ &  \leq  2 \int_0^\tau \int_{\R^d} \alpha_\nu^-(t) \,\abs{\nabla^s U(t)} \, dx \, dt + 2 \int_0^\tau \int_{\R^d} \alpha_\nu^+(t)\, \abs{\nabla^s U(t)}\, dx \, dt 
    \\ & \quad +  2\nu \int_0^\tau \norm{\nabla u^\nu(t)}_{L^2_x}\, \norm{\nabla^s U(t)}_{L^2_x} \, dt 
    \\ & \leq   C \int_0^\tau \norm{\nabla^s U(t)}_{L^{\exp}_x}\, \norm{\alpha_\nu^-(t)}_{L^1_x} \bigg[ \left| \log(\norm{\alpha_\nu^-(t)}_{L^1_x})\right| + \log(\norm{\alpha_\nu^-(t)}_{L^\infty_x} + 1) +1 \bigg] \,dt
    \\ & \quad + 2 \int_0^\tau \int_{\R^d} \alpha_\nu^+(t)\, \abs{\nabla^s U(t)} \, dx \, dt  + 2 \nu \int_0^\tau \norm{\nabla u^\nu(t)}_{L^2_x}\, \norm{\nabla^s U(t)}_{L_x^2} \, dt 
    \\ &  \leq  C \int_0^\tau f(t)\, \norm{\alpha_\nu(t)}_{L_x^1} \left[ \left| \log(\norm{\alpha_\nu(t)}_{L_x^1})\right| + \log\left(\nu^{ - \gamma} + 1\right) +1 \right] \, dt 
    \\ & \quad + C \int_{0}^\tau \left( g_0(t) + \sqrt \nu\, g(t)^2\right) \, dt
\end{split}    
\end{equation}
for all $\nu \in (0,1)$ and all $\tau \in [0, T]$, where we defined
$$g_0(t) = \int_{\R^d} \alpha_\nu^+(x,t)\, \abs{\nabla^s U(x,t)} \, dx = \int_{\{\alpha_\nu(\cdot,t) > \nu^{-\gamma}\}} \alpha_\nu(x,t)\, \abs{\nabla^s U(x,t)} \, dx .$$
Note that, in the last step of the above chain of inequalities, we used the fact that the function 
$$\psi(s) = s\,\big[ \abs{ \log(s)} + \log(1+ \nu^{-\gamma}) + 1\big]$$ 
is non-decreasing for any $\nu \in (0,1)$. 
We now need to estimate the function $g_0$. 
To this aim, since $\nabla^s U (\cdot,t) \in L^{\exp}_x$, we note that $\nabla^s U (\cdot,t) \in L^{(1+\sigma/4)'}_x$, so there exists a constant $C(\sigma)>0$ such that 
$$\norm{\nabla^s U (t)}_{L^{(1+\sigma/4)'}_x} \leq C(\sigma) \norm{\nabla^s U(t) }_{L^{\exp}_x} \leq C(\sigma) f(t), \quad f \in L^1_t.  $$
Recalling that $\alpha_\nu(t) = \abs{U(t)-u^\nu(t)}^2 \in L^\infty_t(L^{1+\sigma/2}_x)$, by Chebyshev's inequality we get that
\begin{equation*}
    \left| \{\alpha_\nu(t) > \nu^{-\gamma}\} \right| \leq \left(\nu^\gamma \norm{ \alpha_\nu(t) }_{L_x^{(1+\sigma/2)}}  \right)^{1+\frac{\sigma}{2}}. 
\end{equation*}
Thus, by Holder's inequality, we infer that
\begin{align*}
    g_0(t) & \leq \norm{\nabla^s U(t)}_{L^{(1+\sigma/4)'}_x} \,\norm{\alpha_\nu(t)^+_\nu}_{L^{(1+\sigma/4)}_x} 
    \\ & \leq C(\sigma) \,f(t)\, \abs{\{\alpha_\nu(t)> \nu^{-\gamma}\} }^{\frac{2\sigma}{(2+\sigma)(4+\sigma)}} \,\norm{\alpha_\nu(t)}_{L^{1+\sigma/2}_x}
    \\ & \leq C(\sigma)\, f(t)\, \nu^{\frac{\gamma \sigma}{4+\sigma}}\, \norm{\alpha_\nu(t)}_{L^{1+\sigma/2}_x}^{1+\frac{\sigma}{4+\sigma}}
    \\ & \leq C(\sigma) \,f(t)\, h(t)^{1+\frac{\sigma}{\sigma +4}}\, \nu^{\frac{\gamma\sigma}{4+\sigma}}.
\end{align*} 
Therefore, going back to~\eqref{alpha-nu stima fino a g}, we have that 
\begin{equation}
\label{def funzioni p e q}
\begin{split}
    \norm{\alpha_\nu(\tau)}_{L^1_x} 
    &\leq   C \int_0^\tau f(t)\, \norm{\alpha_\nu(t)}_{L^1_x} \left[ \left| \log(\norm{\alpha_\nu(t)}_{L^1_x})\right| + \log\left(\nu^{-\gamma} + 1\right) +1 \right] \, dt  
    \\ & \quad + C \int_0^\tau \left( f(t)\, h(t)^{1+\frac{\sigma}{\sigma+4}} \,\nu^{\frac{\gamma\sigma}{\sigma + 4}} + \sqrt \nu \,g(t)^2 \right) \, dt 
    \\ & \leq   \nu^{\frac{\gamma\sigma}{\sigma + 4}} \int_0^\tau q(t)\, dt 
   + \int_0^\tau p(t)\, \norm{\alpha(t)}_{L^1_x} \left( \left|\log(\norm{\alpha(t)}_{L^1_x})\right| + \log(\nu^{-\gamma} + 1) +1 \right) \, dt,  
\end{split}    
\end{equation}
for all $\tau \in [0, T]$ and all $\nu\in (0, 1)$.
Here and in what follows, $p, q \in L^1([0,T])$ are two non-negative functions depending on $f$, $g$, $h$ and~$\sigma$ only.
Moreover, we assumed that $\sigma$ is small enough to guarantee that $\frac{\sigma}{\sigma +4 }\le\frac12$. 
Note that this assumption is not restrictive, in virtue of the interpolation between $L^2_x$ and $L_x^{2+\sigma}$.

\smallskip

\textit{Step~2:  Gr\"onwall's inequality}.
We now choose $\gamma = 1$. 
Letting
\begin{equation*}
y_\nu(\tau)= \norm{\alpha_\nu(\tau)}_{L_x^1},
\qquad
\theta=\frac{\sigma}{4+\sigma},
\qquad
C_\nu=(\nu^{-1} + 1) e,
\end{equation*}
we can equivalently rewrite inequality~\eqref{def funzioni p e q} as
\begin{equation}\label{casino1}
y_\nu(\tau) \leq    \nu^{\theta}\int_0^\tau q(t)\,dt + \int_0^\tau p(t) y_\nu(t)\left(\left|\log y_\nu (t)\right|+\log C_\nu \right) \, dt.
\end{equation}
Define $\tilde y_\nu (\tau)=y_\nu (\tau)+\nu $. Since the function $y\mapsto y\left( \left| \log y\right|+\log C_\nu \right)$ is increasing, we get 
$$
\tilde y_\nu (\tau)\leq \nu +\nu^\theta\int_0^\tau q(t)\,dt +\int_0^\tau p(t)\tilde  y_\nu (t) \left(\left|\log \tilde y_\nu (t)\right|+\log C_\nu \right) \, dt.
$$
In view of \cref{uniform in nu bound} we can pick a time $t_0>0$ such that
$y_\nu (\tau)\leq \frac12$
for every $\nu \in (0,1) $ and for every $\tau\in [0,t_0]$.
In particular, for $\nu\leq \frac12$, we have
$$
\nu\leq \tilde y_\nu(\tau)\leq 1
\quad 
\text{for all}\ \tau\in [0,t_0].
$$
We remark that the choice of $t_0$ is independent of $\nu$. 
With this choice, we have $|\log \tilde y_\nu (\tau)|\leq -\log \nu $ and thus
$$
\tilde y_\nu (\tau)\leq \nu^\theta\left(1+\int_0^\tau q(t)\,dt\right) + (\log M_\nu)  \int_0^\tau   p(t)\,\tilde  y_\nu (t) \, dt,
$$
where we defined $M_\nu =C_\nu/\nu$. Note that 
\begin{equation}
    \label{boundsuM}
    M_\nu =\frac{(\nu^{-1}+1)e}{\nu }\leq \frac{C}{\nu^2},
\end{equation}
for some universal constant $C>0$. By Gr\"onwall's inequality, we get that
\begin{equation*}
    \tilde y_\nu(\tau)\leq \nu^\theta\left(1+\int_0^\tau q(t)\,dt\right)e^{ \log M_\nu  \int_0^\tau p(t)\,dt }\leq C\nu^{\theta-2\int_0^\tau p(t)\,dt},
\end{equation*}
for all $\tau\in [0,t_0]$,
where in the last inequality follows from \eqref{boundsuM} and from the fact $q\in L^1([0,T])$. 
Since also $p\in L^1([0,T])$, by possibly decreasing further $t_0$ depending on $\theta$ and $p$ but not on $\nu$, we can enforce
\begin{equation}
    \label{int_p_piccolo}
    \sup_{s\in[0,T-t_0]}\int_{s}^{s+t_0}p(t)\,dt\leq \frac{\theta}{4},
\end{equation}
yielding that $\tilde y_\nu (\tau)\leq C\nu ^\frac{\theta}{2}$ for all $\tau\in[0,t_0]$ for some  constant $C>0$ only depending on $\theta$ and on the functions~$f$, $g$ and~$h$ in the statement of \cref{t:main2}. 
In particular, we have
$$
y_\nu (\tau)\leq C\nu ^\frac{\theta}{2}
$$
for all $\tau\in [0,t_0]$ and all $\nu \in (0, 1/2)$.

\smallskip 

\textit{Step~3: iteration argument}.
Now we need to iterate the same procedure followed in Step~2 in every interval of the form $[it_0,(i+1)t_0]$, $i\in\mathbb N$, until we cover the whole interval $[0,T]$ in a finite number of steps, say $N\in\mathbb N$, where $t_0>0$ is the positive time chosen in Step~2. We remark that $t_0$ depends only on $\sigma$ and on the equi-integrability of the functions $f$ and $f h^2 + g^2$, recall \cref{uniform in nu bound}. 

To fix the ideas, we describe in details how to get the desired estimate in the interval $[t_0,2t_0]$. 
Since $2E^\nu_{rel}(t_0)=y_\nu(t_0)\leq C\nu^{\frac{\theta}{2}}$ according to Step~2, by \cref{relative energy estimate van visc}  together with \cref{uniform in nu bound}, we have 
$$
y_\nu(\tau)\leq C\nu^{\frac{\theta}{2}}+\frac12
$$ 
for all $\tau\in[t_0,2t_0]$.
Now define $\tilde y_\nu(\tau)=y_\nu(\tau)+\nu^{\frac{\theta}{2}}$. It is clear that, by possibly choosing~$\nu$ even smaller depending on $C$ and $\theta$ only, we can ensure that 
$$
\nu^{\frac{\theta}{2}}\leq \tilde y_\nu(\tau)\leq 1
$$
for all $\tau\in[t_0,2t_0]$.
By repeating all the computations from the beginning (that is, by considering~\eqref{def funzioni p e q} starting from $\tau=t_0$ instead of $\tau=0$) and this time choosing $\gamma = \frac{\theta}{2}$, we get
\begin{align*}
\tilde y_\nu (\tau)&\leq 2 E_{\rel}^\nu (t_0)+\nu^{\frac{\theta}{2}}+ \nu^{\frac{\theta^2}{2}}\left(1+\int_{t_0}^\tau q(t)\,dt\right) 
\\
&\quad+\int_{t_0}^\tau p(t)\,\tilde  y_\nu (t)\left( \log \left(1+\tfrac{1}{\nu^{\theta/2}}\right)+\log\left(\tfrac{1}{\nu^{\theta/2}}\right) \right) dt\\
&\leq C\nu^{\frac{\theta^2}{2}}\left(1+\int_{t_0}^\tau q(t)\,dt\right)+C \int_{t_0}^\tau p(t)\,\tilde  y_\nu (t)\log\left(\tfrac{1}{\nu^\theta}\right) \, dt
\end{align*}
for all $\tau\in[t_0,2t_0]$.
Therefore, once again by Gr\"onwall's inequality, we deduce that
$$
\tilde y_\nu (\tau)\leq C^2 \nu^{\frac{\theta^2}{2}}\left(1+\int_{t_0}^\tau q(t)\,dt\right)\left(\frac{1}{\nu^\theta}\right)^{\int_{t_0}^{\tau}p(t)\,dt}\leq C^2 \nu^{\frac{\theta^2}{2}-\theta\int_{t_0}^{\tau}p(t)\,dt}
$$
for all $\tau\in[t_0,2t_0]$.
By our choice of $t_0$ in~\eqref{int_p_piccolo}, we have $\int_{t_0}^{2t_0}p(t)\,dt\leq \frac{\theta}{4}$, from which
$$
\nu^{\frac{\theta^2}{2}-\theta\int_{t_0}^{2t_0}p(t)\,dt}\leq \nu^{\frac{\theta^2}{2}-\frac{\theta^2}{4}}\leq \nu ^{\frac{\theta^2}{4}}.
$$
Thus 
$$
\tilde y_\nu (\tau)\leq C^2 \nu ^{\frac{\theta^2}{4}}
$$
for all $\tau\in[t_0,2t_0]$.
Repeating the above argument for the subsequent time intervals of size $t_0$ for a finite number of steps, say $N$, we can cover the whole interval $[0,T]$ and we finally obtain
$$
y_\nu(\tau)\leq C^N\nu^{\left(\frac{\theta}{2}\right)^N}
$$
for all $\tau\in[0,T]$, where $C>0$ is a constant only depending on $\sigma$ and the functions $f$, $g$ and $h$ in the statement of \cref{t:main2}.
The desired estimate in~\eqref{rate 1} thus follows by setting $M=\max\left\{C^N, \left(\frac{2}{\theta}\right)^N\right\}$ and the proof is complete.
\end{proof}


\begin{bibdiv}
\begin{biblist}

\bib{ABC21}{article}{
   author={Albritton, Dallas},
   author={Bru\'{e}, Elia},
   author={Colombo, Maria},
   title={Non-uniqueness of Leray solutions of the forced Navier-Stokes
   equations},
   journal={Ann. of Math. (2)},
   volume={196},
   date={2022},
   number={1},
   pages={415--455},
}

\bib{ABCDGJK22}{article}{
   author={Albritton, Dallas},
   author={Bru\'{e}, Elia},
   author={Colombo, Maria},
   author={De Lellis, Camillo},
   author={Giri, Vikram},
   author={Janisch, Maximilian },
   author={Kwon, Hyunju},
   title={Instability and nonuniqueness for the 2d Euler equations in vorticity form, after M. Vishik},
  journal={	To appear in Ann. of Math. Stud.},
   date={2022},
   eprint={
https://arxiv.org/abs/2112.04943v4},
}

\bib{BS88}{book}{
   author={Bennett, Colin},
   author={Sharpley, Robert},
   title={Interpolation of operators},
   series={Pure and Applied Mathematics},
   volume={129},
   publisher={Academic Press, Inc., Boston, MA},
   date={1988},
}

\bib{BDS11}{article}{
   author={Brenier, Yann},
   author={De Lellis, Camillo},
   author={Sz\'{e}kelyhidi, L\'{a}szl\'{o}, Jr.},
   title={Weak-strong uniqueness for measure-valued solutions},
   journal={Comm. Math. Phys.},
   volume={305},
   date={2011},
   number={2},
   pages={351--361},
}

\bib{BDSV19}{article}{
   author={Buckmaster, Tristan},
   author={de Lellis, Camillo},
   author={Sz\'{e}kelyhidi, L\'{a}szl\'{o}, Jr.},
   author={Vicol, Vlad},
   title={Onsager's conjecture for admissible weak solutions},
   journal={Comm. Pure Appl. Math.},
   volume={72},
   date={2019},
   number={2},
   pages={229--274},
}

\bib{CCFS08}{article}{
   author={Cheskidov, A.},
   author={Constantin, P.},
   author={Friedlander, S.},
   author={Shvydkoy, R.},
   title={Energy conservation and Onsager's conjecture for the Euler
   equations},
   journal={Nonlinearity},
   volume={21},
   date={2008},
   number={6},
   pages={1233--1252},
}

\bib{CL21}{article}{
   author={Cheskidov, Alexey},
   author={Luo, Xiaoyutao},
   title={Sharp nonuniqueness for the Navier-Stokes equations},
   journal={Invent. Math.},
   volume={229},
   date={2022},
   number={3},
   pages={987--1054},
}

\bib{DRS21}{article}{
   author={Daneri, Sara},
   author={Runa, Eris},
   author={Sz\'{e}kelyhidi, L\'{a}szl\'{o}},
   title={Non-uniqueness for the Euler equations up to Onsager's critical
   exponent},
   journal={Ann. PDE},
   volume={7},
   date={2021},
   number={1},
   pages={Paper No. 8, 44},
}

\bib{DS10}{article}{
   author={De Lellis, Camillo},
   author={Sz\'{e}kelyhidi, L\'{a}szl\'{o}, Jr.},
   title={On admissibility criteria for weak solutions of the Euler
   equations},
   journal={Arch. Ration. Mech. Anal.},
   volume={195},
   date={2010},
   number={1},
   pages={225--260},
}

\bib{DH21}{article}{
   author={De Rosa, Luigi},
   author={Haffter, Silja},
   title={Dimension of the singular set of wild H\"{o}lder solutions of the
   incompressible Euler equations},
   journal={Nonlinearity},
   volume={35},
   date={2022},
   number={10},
   pages={5150--5192},
}

\bib{DM87}{article}{
   author={DiPerna, Ronald J.},
   author={Majda, Andrew J.},
   title={Oscillations and concentrations in weak solutions of the
   incompressible fluid equations},
   journal={Comm. Math. Phys.},
   volume={108},
   date={1987},
   number={4},
}

\bib{FGJ19}{article}{
   author={Feireisl, Eduard},
   author={Ghoshal, Shyam Sundar},
   author={Jana, Animesh},
   title={On uniqueness of dissipative solutions to the isentropic Euler
   system},
   journal={Comm. Partial Differential Equations},
   volume={44},
   date={2019},
   number={12},
   pages={1285--1298},
}

\bib{H51}{article}{
   author={Hopf, Eberhard},
   title={\"{U}ber die Anfangswertaufgabe f\"{u}r die hydrodynamischen
   Grundgleichungen},
   language={German},
   journal={Math. Nachr.},
   volume={4},
   date={1951},
   pages={213--231},
}

\bib{Is18}{article}{
   author={Isett, Philip},
   title={A proof of Onsager's conjecture},
   journal={Ann. of Math. (2)},
   volume={188},
   date={2018},
   number={3},
   pages={871--963},
}

\bib{K07}{article}{
   author={Kelliher, James P.},
   title={On Kato's conditions for vanishing viscosity},
   journal={Indiana Univ. Math. J.},
   volume={56},
   date={2007},
   number={4},
   pages={1711--1721},
}

\bib{KK91}{book}{
   author={Kokilashvili, Vakhtang},
   author={Krbec, Miroslav},
   title={Weighted inequalities in Lorentz and Orlicz spaces},
   publisher={World Scientific Publishing Co., Inc., River Edge, NJ},
   date={1991},
}

\bib{L34}{article}{
   author={Leray, Jean},
   title={Sur le mouvement d'un liquide visqueux emplissant l'espace},
   language={French},
   journal={Acta Math.},
   volume={63},
   date={1934},
   number={1},
   pages={193--248},
}

\bib{Lions96}{book}{
   author={Lions, Pierre-Louis},
   title={Mathematical topics in fluid mechanics. Vol. 1},
   series={Oxford Lecture Series in Mathematics and its Applications},
   volume={3},
   note={Incompressible models;
   Oxford Science Publications},
   publisher={The Clarendon Press, Oxford University Press, New York},
   date={1996},
}

\bib{Ma07}{article}{
   author={Masmoudi, Nader},
   title={Remarks about the inviscid limit of the Navier-Stokes system},
   journal={Comm. Math. Phys.},
   volume={270},
   date={2007},
   number={3},
   pages={777--788},
}

\bib{MR08}{article}{
   author={Mucha, Piotr Bogus\l aw},
   author={Rusin, Walter M.},
   title={Zygmund spaces, inviscid limit and uniqueness of Euler flows},
   journal={Comm. Math. Phys.},
   volume={280},
   date={2008},
   number={3},
   pages={831--841},
}

\bib{RR91}{book}{
   author={Rao, M. M.},
   author={Ren, Z. D.},
   title={Theory of Orlicz spaces},
   series={Monographs and Textbooks in Pure and Applied Mathematics},
   volume={146},
   publisher={Marcel Dekker, Inc., New York},
   date={1991},
}

\bib{RRS16}{book}{
   author={Robinson, James C.},
   author={Rodrigo, Jos\'{e} L.},
   author={Sadowski, Witold},
   title={The three-dimensional Navier-Stokes equations},
   series={Cambridge Studies in Advanced Mathematics},
   volume={157},
   publisher={Cambridge University Press, Cambridge},
   date={2016},
}

	\bib{R06}{article}{
   author={Rusin, Walter M.},
   title={On the inviscid limit for the solutions of two-dimensional
   incompressible Navier-Stokes equations with slip-type boundary
   conditions},
   journal={Nonlinearity},
   volume={19},
   date={2006},
   number={6},
   pages={1349--1363},
}

\bib{Sch93}{article}{
   author={Scheffer, Vladimir},
   title={An inviscid flow with compact support in space-time},
   journal={J. Geom. Anal.},
   volume={3},
   date={1993},
   number={4},
   pages={343--401},
}

\bib{V1}{article}{
   author={Vishik, Misha},
   title={Instability and non-uniqueness in the Cauchy problem for the Euler equations of an ideal incompressible fluid. Part I},
      date={2018},
   eprint={
https://arxiv.org/abs/1805.09426},
}

\bib{V2}{article}{
   author={Vishik, Misha},
   title={Instability and non-uniqueness in the Cauchy problem for the Euler equations of an ideal incompressible fluid. Part II},
      date={2018},
   eprint={
https://arxiv.org/abs/1805.09440},
}

\bib{W18}{article}{
   author={Wiedemann, Emil},
   title={Weak-strong uniqueness in fluid dynamics},
   conference={
      title={Partial differential equations in fluid mechanics},
   },
   book={
      series={London Math. Soc. Lecture Note Ser.},
      volume={452},
      publisher={Cambridge Univ. Press, Cambridge},
   },
   date={2018},
   pages={289--326},
}

\bib{W11}{article}{
   author={Wiedemann, Emil},
   title={Existence of weak solutions for the incompressible Euler
   equations},
   journal={Ann. Inst. H. Poincar\'{e} C Anal. Non Lin\'{e}aire},
   volume={28},
   date={2011},
   number={5},
   pages={727--730},
}
\end{biblist}
\end{bibdiv}

\end{document}